\documentclass[pdflatex,sn-mathphys-num]{sn-jnl}

\usepackage{graphicx}
\usepackage{amsmath,amssymb,amsfonts}
\usepackage{amsthm}
\usepackage{mathrsfs}
\usepackage{enumitem}
\usepackage[title]{appendix}
\usepackage{xcolor}
\usepackage{manyfoot}
\usepackage{booktabs}
\usepackage{listings}

\theoremstyle{thmstyleone}
\newtheorem{theorem}{Theorem}
\newtheorem{lemma}[theorem]{Lemma}

\newtheorem{proposition}[theorem]{Proposition} 

\theoremstyle{thmstyletwo}

\newtheorem{remark}{Remark}

\theoremstyle{thmstylethree}

\DeclareMathOperator*{\argmin}{arg\,min}

\DeclareMathOperator{\prox}{prox}

\newcommand{\qbox}[1]{\quad\hbox{#1}\quad}

\newcommand{\R}{\mathbb{R}}
\newcommand{\wbx}{\hfill$\square$}

\newcommand{\ackname}{Acknowledgements}
\def\acknowledgement{\par\addvspace{17pt}\small\rmfamily
\trivlist\if!\ackname!\item[]\else
\item[\hskip\labelsep
{\bfseries\ackname}]\fi}

\newenvironment{acknowledgements}{\begin{acknowledgement}}
{\end{acknowledgement}}

\begin{document}

\title{ Accelerated Backward Forward Method for Convex Optimization }

\author{\fnm{Zepeng} \sur{Wang}}\email{zepeng.wang@rug.nl}

\author{\fnm{Juan} \sur{Peypouquet}}\email{j.g.peypouquet@rug.nl}

\affil{\orgdiv{Bernoulli Institute for Mathematics, Computer Science and Artificial Intelligence}, \orgname{University of Groningen}, \state{Groningen}, \country{The Netherlands}}

\abstract{ We analyze the convergence rate of an accelerated backward forward method for solving convex composite optimization problems. The method was developed by Taylor, Hendrickx and Glineur, and is different from the FISTA algorithm in its placement of the proximal operator. When the smooth part of the objective function is convex, we establish a fast convergence rate of $\mathcal{O}\left( \frac{1}{k^2} \right)$ for the function values, and prove the weak convergence of the iterates. When the smooth part is strongly convex, we propose a variant of the method, and establish an accelerated linear convergence rate for the function values. }

\keywords{ Accelerated gradient method, Accelerated backward forward, Accelerated proximal gradient method, Convex composite optimization }

\maketitle

\section{Introduction}
Let $H$ be a real Hilbert space, and consider the convex composite optimization problem:
\begin{equation}\label{Problem: cvx_composite}
\min_{x\in H} F(x) = f(x) + g(x),
\end{equation}
where $f:H\to\mathbb{R}$ is convex and $L$-smooth; $g:H\to\mathbb{R}\cup\{\infty\}$ is convex, proper and lower-semicontinuous. We assume that the solution set $\mathcal{S}=\argmin_{x\in H} F(x)$ is nonempty and denote $F^* = \min_{x\in H}F(x)$. Throughout this paper, the proximal operator $\prox_g:H\to H$, associated with the function $g$, is defined by
$$ \prox_g(z) := \argmin_{x\in H}\left\{ g(x) + \frac{1}{2}\| x - z \|^2 \right\},\qbox{for} z\in H. $$

To solve problem \eqref{Problem: cvx_composite}, one can employ the accelerated forward backward method, also known as FISTA \cite{Beck_2009}:
\begin{equation}\label{Algo: FISTA}\tag{FISTA}
\left\{
\begin{aligned}
y_{k+1} &= \prox_{sg}\left( x_k - s\nabla f(x_k) \right),\\
x_{k+1} &= y_{k+1} + \frac{k}{k+\alpha}(y_{k+1}-y_k),
\end{aligned}
\right.
\end{equation}   
where $k\ge 0$, $\alpha\ge 3$ and $s\in\left( 0,\frac{1}{L} \right]$. It was shown that $F(y_{k+1}) - F^* \le \mathcal{O}\left( \frac{1}{k^2} \right)$ when $\alpha\ge 3$ \cite{Beck_2009}, and that $F(y_{k+1}) - F^* \le o\left( \frac{1}{k^2} \right)$ when $\alpha>3$ \cite{Attouch_2016}. A weak convergence of the iterates generated by \eqref{Algo: FISTA} can be guaranteed for all $\alpha\ge 3$ \cite{Chambolle_2015,Attouch_2018,Ryu_2025,Radu_2025_iterate,Bauschke_2026}. If $f$ is $\mu$-strongly convex, \eqref{Algo: FISTA} can be modified as \cite{Beck_2017}
\begin{equation}\label{Algo: FISTA-SC}\tag{FISTA-SC}
\left\{
\begin{aligned}
y_{k+1} &= \prox_{sg}\left( x_k - s\nabla f(x_k) \right),\\
x_{k+1} &= y_{k+1} + \frac{1-\sqrt{\mu/L}}{1+\sqrt{\mu/L}}(y_{k+1}-y_k),
\end{aligned}
\right.
\end{equation}
and provides an accelerated linear convergence rate of $\mathcal{O}\left( (1-\sqrt{\mu/L})^k \right)$ for the function values \cite{Beck_2017}. 

The algorithms \eqref{Algo: FISTA} and \eqref{Algo: FISTA-SC} generalize Nesterov's accelerated gradient method \cite{Nesterov_1983,Nesterov_2004} to the forward backward setting. Compared to forward backward algorithms, backward forward algorithms have received relatively less attention \cite{Attouch_2018_BF}. An alternative method to solve \eqref{Problem: cvx_composite} is the accelerated {\it backward forward} algorithms, one of which is \cite[Algorithm FPGM2]{Taylor_2017}:
\begin{equation}\label{Algo: APM}\tag{ABF}
\left\{
\begin{array}{rcl}
y_{k+1} &=& x_k - s \nabla f(x_k), \\[3pt]
z_{k+1} &=& y_{k+1} + \lambda_{k+1} ( y_{k+1} - y_k ) + \frac{\lambda_{k+1}s}{\gamma_k}( z_k - x_k ),\\[3pt]
x_{k+1} &=& \prox_{ \gamma_{k+1} g }(z_{k+1}),
\end{array} 
\right.
\end{equation}
where $k\ge 0$, $s\in\left( 0,\frac{1}{L} \right]$, $x_0=\prox_{\gamma_0 g}(z_0)$, $\gamma_{k+1}=(1 + \lambda_{k+1})s$ and $(\lambda_{k+1})_{k \ge 0}$ is a positive sequence. 

The performance estimation (see \cite{Drori_2014}) analysis carried out in \cite{Taylor_2017} shows a slightly---but strictly!---better worst-case convergence rate for \eqref{Algo: APM} with respect to \eqref{Algo: FISTA}. However, no theoretical convergence proof has been established, and the strongly convex case has not been studied.

In this paper, we revisit \eqref{Algo: APM} and carry out the corresponding convergence analysis. In the convex case, we establish a theoretical convergence rate guarantee for the function values, that exactly matches the one known for \eqref{Algo: FISTA}, showing that \eqref{Algo: APM} is at least as fast, and confirming the numerical findings in \cite{Taylor_2017}. We also prove the weak convergence of the iterates to a minimizer of the objective function. In the case where $f$ is strongly convex, we propose an adaptation of \eqref{Algo: APM} that achieves the accelerated linear convergence rate for the function values which is typical of \eqref{Algo: FISTA-SC}. 

The paper is structured as follows: In Section \ref{Sec: fixed_point}, we briefly discuss the fixed-point properties of \eqref{Algo: APM}, especially in connection with the solutions of \eqref{Problem: cvx_composite}. Section \ref{Sec: estimations} is technical, and contains estimations for the function value gap and its decrease along the iterations. Under convexity, the accelerated convergence rate result is given in Section \ref{Sec: convexity}. The weak convergence of the sequences generated by \eqref{Algo: APM} is shown in Section \ref{Sec: iterates}. Section \ref{Sec: strong convexity} is devoted to the study of a linearly convergent backward-forward method for strongly convex problems. We conclude in Section \ref{Sec: conclusions}.

\section{Fixed-point and asymptotic properties} \label{Sec: fixed_point}
In \eqref{Algo: FISTA}, the (constant) step sizes used in the proximal and the gradient descent operators are the same, namely $s$. As a consequence, the fixed points of \eqref{Algo: FISTA} are precisely the solutions of \eqref{Problem: cvx_composite}. On the other hand, \eqref{Algo: APM} uses two different step sizes, namely $s$ and $\gamma_{k+1}=(1+\lambda_{k+1})s$, one of which can be iteration-dependent. In the convex case, our choice of parameters implies that $\lambda_{k+1}\to 1$, whence $\gamma_{k+1}\to 2s$ (not $s$) as $k\to\infty$. 

Suppose $(x^*,y^*,z^*)$ is an {\it asymptotic} fixed point of \eqref{Algo: APM}, in the sense that the iteration-dependent parameters are replaced by their limits. This gives:
$$\left\{
\begin{array}{rcl}
y^* &=& x^* - s \nabla f(x^*) \\
z^* &=& y^* + ( y^* - y^* ) + \frac{1}{2}( z^* - x^* ) \\
x^* &=& \prox_{ 2s g }(z^*),
\end{array} 
\right.
$$
which is equivalent to
$$\left\{
\begin{array}{rcl}
2x^*-2y^* &=&  2s \nabla f(x^*) \\
2y^*-x^*-z^* &=& 0 \\
z^*-x^* &\in& 2s\partial g(x^*).
\end{array} 
\right.
$$
This easily implies that
$$0\in\nabla f(x^*)+\partial g(x^*),$$
and so $x^*$ is a minimizer of $F$. It also follows that $y^*=x^*-s\nabla f(x^*)$ and $z^*=x^*-2s\nabla f(x^*)$, respectively. These are the relationships between the limits of the sequences generated by \eqref{Algo: APM}, provided they exist (which they do, as we shall see later).

\begin{remark} \label{Remark:nablaf_constant}
The set $\mathcal S$ of solutions to problem \eqref{Problem: cvx_composite} may have multiple points. However, the gradient of $f$ is constant on $\mathcal S$, in view of the present hypotheses. Indeed, let $x_1^*,x_2^*\in\mathcal S$, so that $-\nabla f(x_i^*)\in\partial g(x_i^*)$ for $i=1,2$. Since $f$ is $L$-smooth and convex, and $g$ is convex, we have
\begin{align*}
    f(x_1^*) & \ge f(x_2^*)+\langle\nabla f(x_2^*),x_1^*-x_2^*\rangle+\frac{1}{2L}\|\nabla f(x_1^*) -\nabla f(x_2^*)\|^2 \\
    g(x_1^*) & \ge g(x_2^*)-\langle\nabla f(x_2^*),x_1^*-x_2^*\rangle. 
\end{align*}
Adding the two inequalities, we deduce that 
$$F(x_1^*)  \ge F(x_2^*)+\frac{1}{2L}\|\nabla f(x_1^*) -\nabla f(x_2^*)\|^2.$$
Interchanging the roles of $x_1^*$ and $x_2^*$, we deduce that $\nabla f(x_1^*) =\nabla f(x_2^*)$.
\end{remark}

\section{Estimations for the function value gap and its decrease}\label{Sec: estimations}
\begin{proposition}\label{Prop: F_estimate}
Let $f:H\to\R$ be $\mu$-strongly convex and $L$-smooth. Consider algorithm \eqref{Algo: APM} with $s\in\left( 0,\frac{1}{L} \right]$, and set $x^*\in\mathcal{S}$. Then, for every $k\ge 0$, we have
\begin{align*}
F(x_{k+1}) - F(x_k) 
&\le - \frac{1}{2s(1-\mu s)} \| y_{k+2} - y_{k+1} \|^2 
    - \frac{1}{2s}\| x_{k+1} - x_k \|^2 \\
&\quad + \frac{\lambda_{k+1}}{s} \langle y_{k+1}-y_k, x_{k+1} - x_k \rangle, 
\end{align*}
and
\begin{align*}
F(x_{k+1}) - F^* + (1+\lambda_{k+1})\eta_{k+1} - \lambda_{k+1} \eta_k + \psi_{k+1} 
&\le - \frac{1}{s} \langle y_{k+2}-y_{k+1}, x_{k+1}-x^* \rangle \\
&\quad + \frac{\lambda_{k+1}}{s} \langle y_{k+1}-y_k, x_{k+1} - x^* \rangle,
\end{align*}
where
\begin{align*}
\eta_k &:= \left\langle \frac{z_k-x_k}{\gamma_k}, x_k - x^* \right\rangle - \left( g(x_k) - g(x^*) \right)\ge 0,\\
\psi_k &:= \langle \nabla f(x_k), x_k - x^* \rangle - \left( f(x_k) - f(x^*) \right) \ge 0.
\end{align*}
\end{proposition}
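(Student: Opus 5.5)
The plan is to rewrite \eqref{Algo: APM} as one identity linking consecutive differences of $y$ to subgradients of $g$ and gradients of $f$, and then to obtain each inequality by summing convexity-type inequalities for $f$ and $g$. Set
$$u_k:=\frac{z_k-x_k}{\gamma_k},$$
so that $x_k=\prox_{\gamma_k g}(z_k)$ gives $u_k\in\partial g(x_k)$. The second line of the algorithm reads $z_{k+1}=y_{k+1}+\lambda_{k+1}(y_{k+1}-y_k)+\lambda_{k+1}s\,u_k$. Using $x_{k+1}=z_{k+1}-(1+\lambda_{k+1})s\,u_{k+1}$ and $y_{k+2}=x_{k+1}-s\nabla f(x_{k+1})$, we get the key identity
$$\frac{1}{s}(y_{k+2}-y_{k+1})-\frac{\lambda_{k+1}}{s}(y_{k+1}-y_k) = -G_{k+1}-\lambda_{k+1}(u_{k+1}-u_k),\qquad G_{k+1}:=\nabla f(x_{k+1})+u_{k+1}.$$
Nonnegativity of $\eta_k$ is the subgradient inequality $g(x^*)\ge g(x_k)+\langle u_k,x^*-x_k\rangle$. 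Nonnegativity of $\psi_k$ is the gradient inequality for the convex function $f$.

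\textbf{Second inequality.} By the definitions of $\eta_{k+1}$ and $\psi_{k+1}$,
$$F(x_{k+1})-F^*=\langle G_{k+1},x_{k+1}-x^*\rangle-\eta_{k+1}-\psi_{k+1}.$$
Hence the left-hand side equals $\langle G_{k+1},x_{k+1}-x^*\rangle+\lambda_{k+1}(\eta_{k+1}-\eta_k)$. By the key identity, the right-hand side equals $\langle G_{k+1},x_{k+1}-x^*\rangle+\lambda_{k+1}\langle u_{k+1}-u_k,x_{k+1}-x^*\rangle$. Since $\lambda_{k+1}>0$, it suffices to show $\eta_{k+1}-\eta_k\le\langle u_{k+1}-u_k,x_{k+1}-x^*\rangle$. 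Expanding the $\eta$'s, this reduces to
$$g(x_{k+1})\ge g(x_k)+\langle u_k,x_{k+1}-x_k\rangle,$$
which is the subgradient inequality at $x_k$.

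\textbf{First inequality.} Write $\Delta x=x_{k+1}-x_k$ and $\Delta\nabla=\nabla f(x_{k+1})-\nabla f(x_k)$, so that $y_{k+2}-y_{k+1}=\Delta x-s\Delta\nabla$.
\begin{itemize}
\item For $g$, use $g(x_{k+1})-g(x_k)\le\langle u_{k+1},\Delta x\rangle$.
\item For $f$, use the $(\mu,L)$ interpolation inequality applied at the pair $(x_{k+1},x_k)$:
$$f(x_k)\ge f(x_{k+1})-\langle\nabla f(x_{k+1}),\Delta x\rangle+\frac{1}{2(1-\mu/L)}\Big(\tfrac1L\|\Delta\nabla\|^2+\mu\|\Delta x\|^2-\tfrac{2\mu}{L}\langle\Delta\nabla,\Delta x\rangle\Big).$$
I would use it with $L$ replaced by $1/s$, which is allowed because $f$ is also $1/s$-smooth when $s\le 1/L$.
\end{itemize}
Adding the two gives $F(x_{k+1})-F(x_k)\le\langle G_{k+1},\Delta x\rangle-Q$, where $Q$ is the quadratic term above. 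Substitute $G_{k+1}$ from the key identity. The term $-\lambda_{k+1}\langle u_{k+1}-u_k,\Delta x\rangle$ is nonpositive by monotonicity of $\partial g$, so it can be dropped. The term $\frac{\lambda_{k+1}}{s}\langle y_{k+1}-y_k,\Delta x\rangle$ appears exactly as in the statement. What remains to prove is
$$-\frac1s\langle\Delta x-s\Delta\nabla,\Delta x\rangle-Q\le-\frac{1}{2s(1-\mu s)}\|\Delta x-s\Delta\nabla\|^2-\frac{1}{2s}\|\Delta x\|^2.$$

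This last inequality is a pure quadratic-form identity or inequality in $(\Delta x,\Delta\nabla)$, and I expect it to be the main obstacle. I would expand both sides in the three quantities $\|\Delta x\|^2$, $\langle\Delta x,\Delta\nabla\rangle$ and $\|\Delta\nabla\|^2$, and then compare coefficients. With the interpolation constant taken as $1/s$, I expect the two sides to agree exactly, in which case the step is only routine but error-prone algebra. If they do not match, I would fall back on a sum-of-squares argument using $\mu s\le 1$.
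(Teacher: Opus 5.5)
Your proposal is correct and follows essentially the paper's route. Your key identity is the paper's \eqref{E: iterates} rewritten with $G_{k+1}$. You get the second inequality from the subgradient inequality at $x_k$, and the first from the subgradient inequality at $x_{k+1}$, monotonicity of $\partial g$ (to drop $-\lambda_{k+1}\langle u_{k+1}-u_k,\Delta x\rangle$), and the strongly convex interpolation inequality with $L$ replaced by $1/s$, exactly as the paper does.

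Your final quadratic comparison is indeed an exact identity: the coefficients of $\|\Delta x\|^2$, $\langle \Delta x,\Delta\nabla\rangle$ and $\|\Delta\nabla\|^2$ all match. The paper obtains it by expanding $\frac{s}{2}\|\Delta\nabla\|^2=\frac{1}{2s}\|(y_{k+2}-y_{k+1})-\Delta x\|^2$.
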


\begin{proof}
From the update rule for $x_{k+1}$ and $z_{k+1}$ in \eqref{Algo: APM}, we obtain
$$ \frac{z_{k+1} - x_{k+1}}{\gamma_{k+1}} \in \partial g(x_{k+1}), $$
and
\begin{equation}\label{E: subgradient_iterate}
\frac{z_{k+1}-x_{k+1}}{\gamma_{k+1}} 
= \frac{ -(x_{k+1}-y_{k+1}) + \lambda_{k+1} (y_{k+1}-y_k) }{(1 + \lambda_{k+1})s} + \left( \frac{\lambda_{k+1}}{1 + \lambda_{k+1}} \right) \frac{z_k-x_k}{\gamma_k}.
\end{equation}
Multiplying both sides by $1+\lambda_{k+1}$ gives
\begin{equation}\label{E: iterates}
\frac{z_{k+1} - x_{k+1}}{\gamma_{k+1}} + \lambda_{k+1} \left( \frac{z_{k+1} - x_{k+1}}{\gamma_{k+1}} - \frac{z_k-x_k}{\gamma_k} \right)  
= \frac{-(x_{k+1}-y_{k+1}) + \lambda_{k+1} (y_{k+1}-y_k)}{s}.
\end{equation}
Taking inner product with $x_{k+1}-x_k$ and using the convexity of $g$, we deduce that
$$ g(x_{k+1}) - g(x_k) \le - \frac{1}{s} \langle x_{k+1}-y_{k+1}, x_{k+1} - x_k \rangle + \frac{\lambda_{k+1}}{s} \langle y_{k+1}-y_k, x_{k+1} - x_k \rangle.  $$
Since $f$ is $\mu$-strongly convex and $L$-smooth, we know from \cite[Theorem 4]{Talor_2017_SC} that
\begin{align*}
&\quad f(x_{k+1}) - f(x_k) \\ 
&\le \langle \nabla f(x_{k+1}), x_{k+1} - x_k \rangle - \frac{s}{2}\| \nabla f(x_{k+1}) - \nabla f(x_k) \|^2 - \frac{\mu}{2(1-\mu s)}\| y_{k+2} - y_{k+1} \|^2\\
&= -\frac{1}{s} \langle y_{k+2}-x_{k+1}, x_{k+1} - x_k \rangle - \frac{s}{2}\| \nabla f(x_{k+1}) - \nabla f(x_k) \|^2 - \frac{\mu}{2(1-\mu s)}\| y_{k+2} - y_{k+1} \|^2.
\end{align*}
Summing up the last two inequalities gives
\begin{align*}
F(x_{k+1}) - F(x_k)   
&\le -\frac{1}{s} \langle y_{k+2}-y_{k+1}, x_{k+1} - x_k \rangle + \frac{\lambda_{k+1}}{s} \langle y_{k+1}-y_k, x_{k+1} - x_k \rangle \\
&\quad - \frac{s}{2}\| \nabla f(x_{k+1}) - \nabla f(x_k) \|^2
       - \frac{\mu}{2(1-\mu s)}\| y_{k+2} - y_{k+1} \|^2. 
\end{align*}
Using
$$ \nabla f(x_{k+1}) = -\frac{1}{s}(y_{k+2}-x_{k+1}),\quad \nabla f(x_k) = -\frac{1}{s}(y_{k+1}-x_k), $$
we obtain
\begin{align*}
&\quad \frac{s}{2}\| \nabla f(x_{k+1}) - \nabla f(x_k) \|^2 \\
&= \frac{1}{2s} \| (y_{k+2} - y_{k+1}) - (x_{k+1} - x_k) \|^2 \\
&= \frac{1}{2s} \| y_{k+2} - y_{k+1} \|^2 + \frac{1}{2s}\| x_{k+1} - x_k \|^2 
   - \frac{1}{s}\langle y_{k+2} - y_{k+1}, x_{k+1} - x_k \rangle,
\end{align*}
so that
\begin{align*}
- \frac{1}{s}\langle y_{k+2} - y_{k+1}, x_{k+1} - x_k \rangle 
&= \frac{s}{2}\| \nabla f(x_{k+1}) - \nabla f(x_k) \|^2
   - \frac{1}{2s} \| y_{k+2} - y_{k+1} \|^2 \\ 
&\quad - \frac{1}{2s}\| x_{k+1} - x_k \|^2.
\end{align*}
This results in
\begin{align*}
F(x_{k+1}) - F(x_k) 
&\le - \frac{1}{2s(1-\mu s)} \| y_{k+2} - y_{k+1} \|^2 
    - \frac{1}{2s}\| x_{k+1} - x_k \|^2 \\
&\quad + \frac{\lambda_{k+1}}{s} \langle y_{k+1}-y_k, x_{k+1} - x_k \rangle.
\end{align*}

Likewise, we take inner product of \eqref{E: iterates} with $x_{k+1}-x^*$ to obtain
\begin{align*}
&\quad (1+\lambda_{k+1}) \left\langle \frac{z_{k+1} - x_{k+1}}{\gamma_{k+1}}, x_{k+1}-x^*\right\rangle
  -\lambda_{k+1} \left\langle \frac{z_k-x_k}{\gamma_k}, x_{k+1}-x^*\right\rangle \\
&= - \frac{1}{s} \langle x_{k+1}-y_{k+1}, x_{k+1}-x^* \rangle + \frac{\lambda_{k+1}}{s} \langle y_{k+1}-y_k, x_{k+1} - x^* \rangle.
\end{align*}
Since $ \frac{z_k-x_k}{\gamma_k} \in \partial g(x_k)$ and $g$ is convex, we have
\begin{align*}
\left\langle \frac{z_k-x_k}{\gamma_k}, x_{k+1}-x^*\right\rangle
&= \left\langle \frac{z_k-x_k}{\gamma_k}, x_k-x^*\right\rangle
  + \left\langle \frac{z_k-x_k}{\gamma_k}, x_{k+1}-x_k\right\rangle\\
&\le \left\langle \frac{z_k-x_k}{\gamma_k}, x_k - x^* \right\rangle  + g(x_{k+1}) - g(x_k),
\end{align*}
which results in
\begin{align*}
&\quad \left(g(x_{k+1}) - g(x^*)\right)
  + (1+\lambda_{k+1})\left[ \left\langle \frac{z_{k+1}-x_{k+1}}{\gamma_{k+1}}, x_{k+1} - x^* \right\rangle - \left( g(x_{k+1}) - g(x^*) \right) \right]\\
&\quad -\lambda_{k+1} \left[ \left\langle \frac{z_k-x_k}{\gamma_k}, x_k - x^* \right\rangle - \left( g(x_k) - g(x^*) \right) \right] \\ 
&\le - \frac{1}{s} \langle x_{k+1}-y_{k+1}, x_{k+1}-x^* \rangle + \frac{\lambda_{k+1}}{s} \langle y_{k+1}-y_k, x_{k+1} - x^* \rangle.
\end{align*}
Combining this inequality with
\begin{align*}
&\quad f(x_{k+1}) - f(x^*) \\ 
&= \langle \nabla f(x_{k+1}), x_{k+1} - x^* \rangle - \left[ \langle \nabla f(x_{k+1}), x_{k+1} - x^* \rangle - \left( f(x_{k+1}) - f(x^*) \right) \right] \\
&= -\frac{1}{s}\langle y_{k+2}-x_{k+1}, x_{k+1} - x^* \rangle - \left[ \langle \nabla f(x_{k+1}), x_{k+1} - x^* \rangle - \left( f(x_{k+1}) - f(x^*) \right) \right],   
\end{align*}
we obtain the desired result.
\end{proof}

\section{Convergence rate for convex problems}\label{Sec: convexity}
In this section, we establish a fast convergence rate $\mathcal{O}\left( \frac{1}{k^2} \right)$ for \eqref{Algo: APM} when $f$ is convex, which matches the one obtained for \eqref{Algo: FISTA}. According to the PEP methodology used in \cite{Taylor_2017}, algorithm \eqref{Algo: APM} has a strictly better worst-case convergence rate than \eqref{Algo: FISTA} in practice.

Define the sequence $(t_k)_{k\ge 0}$ by 
\begin{equation}\label{E: t_k}
t_k = \left\{
\begin{array}{ccl}
1,&& \text{if}\quad k=0,\\[3pt]
\frac{m+\sqrt{m^2 + 4t_{k-1}^2}}{2},&& \text{if}\quad k\ge 1,
\end{array}    
\right.
\end{equation}
with $m\in(0,1]$, which satisfies $t_k \ge 1$ and
$$ t_{k+1}(t_{k+1}-m) = t_k^2, $$
and set
$$ \lambda_{k+1} = \frac{t_k-1}{t_{k+1}}. $$

\begin{remark}
If $m=1$, the $(t_k)$ sequence reduces to the one defined by Nesterov's method \cite{Nesterov_1983}. In fact, a general $m\in(0,1]$ gives
$$ t_k^2 \ge t_{k+1}(t_{k+1}-1), $$
which suffices to guarantee the convergence of algorithm \eqref{Algo: APM}. In view of this fact, setting $t_k = \frac{k+\alpha-1}{\alpha-1}$ with $\alpha\ge 3$, we obtain 
$$ \lambda_{k+1} = \frac{k}{k+\alpha},\qbox{with}\alpha\ge 3, $$
using which a similar convergence rate for the function values can be derived. 
\end{remark}

The main convergence result of this section is:

\begin{theorem}\label{Thm: rate_cvx}
Let $F=f+g$, where $f:H\to\mathbb{R}$ is convex and $L$-smooth, and $g:H\to\mathbb{R}\cup\{\infty\}$ is convex, proper and lower-semicontinuous. Let $(x_k)_{k\ge 0}$, $(y_k)_{k\ge 0}$ and $(z_k)_{k\ge 0}$ be generated by algorithm \eqref{Algo: APM}, where 
$$ s\in\left( 0,\frac{1}{L} \right],\quad \lambda_{k+1} = \frac{t_k-1}{t_{k+1}}, $$
with $(t_k)_{k\ge 0}$ defined by \eqref{E: t_k}, and initial conditions 
$$ \gamma_0 = s,\quad y_0\in H,\quad z_0=y_0-\gamma_0 \nabla f(y_0),\quad x_0=\prox_{\gamma_0 g}(z_0). $$
For every $k\ge 0$, we have
$$ F(x_k) - F^* \le \frac{\| y_0-x^* \|^2}{2st_k^2}. $$
Besides, every weak subsequential limit point of $x_k$, as $k\to\infty$, minimizes $F$.
\end{theorem}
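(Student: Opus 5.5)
We will build a Lyapunov energy from the two inequalities of Proposition~\ref{Prop: F_estimate}, used with $\mu=0$ (plain convexity). The target is
\[
E_k := 2s\,t_k^2\bigl(F(x_k)-F^*\bigr) + \|u_k\|^2 + (\text{nonnegative multiples of }\eta_k,\psi_k),
\]
with $u_k$ the FISTA-type ``momentum'' vector suggested by the inequalities. Since both right-hand sides are linear in the differences $y_{k+2}-y_{k+1}$ and $y_{k+1}-y_k$, the natural choice is
\[
u_{k+1} := t_{k+1}(y_{k+2}-y_{k+1}) + (x_{k+1}-x^*) - \text{(correction)},
\]
in analogy with Beck--Teboulle's $t_{k+1}y_{k+1}-(t_{k+1}-1)y_k-x^*$. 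We will aim to show $E_{k+1}\le E_k$ and then compute $E_0\le\|y_0-x^*\|^2$ from the initialization $\gamma_0=s$, $z_0=y_0-s\nabla f(y_0)$. In this initialization $x_0$ is exactly one forward-backward step from $y_0$, so the standard one-step estimate gives $2s(F(x_0)-F^*)+\|x_0-x^*\|^2\le\|y_0-x^*\|^2$.

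\textbf{Combining the inequalities.} Multiply the first inequality (descent) by $t_{k+1}(t_{k+1}-1)$ and the second (gap) by $t_{k+1}$, then add. The identity $t_{k+1}(t_{k+1}-m)=t_k^2$, together with $m\le1$, gives
\[
t_{k+1}(t_{k+1}-1)\le t_k^2,
\]
so the function-value terms telescope to $t_{k+1}^2(F(x_{k+1})-F^*)-t_k^2(F(x_k)-F^*)$. Here we use $F(x_k)\ge F^*$.

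The inner products collapse as follows. Since $\lambda_{k+1}t_{k+1}=t_k-1$, the terms carrying $\lambda_{k+1}\langle y_{k+1}-y_k,\cdot\rangle$ combine into
\[
\frac{t_k-1}{s}\,\bigl\langle y_{k+1}-y_k,\; t_{k+1}x_{k+1}-(t_{k+1}-1)x_k-x^*\bigr\rangle .
\]
The terms carrying $-\tfrac{1}{s}\langle y_{k+2}-y_{k+1},\cdot\rangle$ give the analogous expression with index shifted. Together with the negative squares $-\tfrac{t_{k+1}(t_{k+1}-1)}{2s}\bigl(\|y_{k+2}-y_{k+1}\|^2+\|x_{k+1}-x_k\|^2\bigr)$, we then complete squares via
\[
2\langle a,b\rangle=\|a+b\|^2-\|a\|^2-\|b\|^2 .
\]
The expected result is $\tfrac{1}{2s}\bigl(\|u_k\|^2-\|u_{k+1}\|^2\bigr)$ plus nonpositive leftovers.

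\textbf{The $\eta$ and $\psi$ terms.} These need care. The second inequality contributes $t_{k+1}\bigl((1+\lambda_{k+1})\eta_{k+1}-\lambda_{k+1}\eta_k\bigr)+t_{k+1}\psi_{k+1}$. Since $t_{k+1}\lambda_{k+1}=t_k-1$, this equals
\[
(t_{k+1}+t_k-1)\,\eta_{k+1}-(t_k-1)\,\eta_k+t_{k+1}\psi_{k+1}.
\]
We will include $(t_k-1)\eta_k$ in $E_k$. The step then needs $(t_{k+1}+t_k-1)\ge t_{k+1}-1$, which holds, and $\eta_{k+1}\ge0$ lets us drop the excess. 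The $\psi$ term is simply discarded, since $\psi\ge0$.

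\textbf{Main obstacle.} The delicate step is the square completion. Unlike in FISTA, the gradient step is taken at $x_k$ while the prox output is $x_k$, so $u_k$ must mix $y$-differences with $x$-differences, and the cross term between $y_{k+1}-y_k$ and $x_{k+1}-x_k$ must be matched exactly. I would first try
\[
u_k = t_k(y_{k+1}-y_k) - (t_k-1)(x_k-x_{k-1}) + (x_k-x^*)
\]
and check that the residual quadratic form is negative semidefinite, possibly using the slack $t_k^2-t_{k+1}(t_{k+1}-1)=(1-m)t_{k+1}$. Summing the one-step inequality yields
\[
2st_k^2\bigl(F(x_k)-F^*\bigr)\le E_k\le E_0\le\|y_0-x^*\|^2,
\]
which is the stated rate.

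\textbf{Subsequential limits.} Since $t_k\to\infty$, the rate gives $F(x_k)\to F^*$. Because $F$ is convex and lower semicontinuous, it is weakly lower semicontinuous, so any weak cluster point $\bar x$ satisfies $F(\bar x)\le\liminf F(x_k)=F^*$, hence $\bar x\in\mathcal S$.
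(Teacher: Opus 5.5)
\textbf{Verdict: there is a genuine gap.} Your outer structure agrees with the paper: the multipliers $t_{k+1}(t_{k+1}-1)$ and $t_{k+1}$, the weight $t_k-1$ on $\eta_k$, the one-step bound at $k=0$, and the weak-limit argument. But the decisive step, $E_{k+1}\le E_k$, is not established, and with the ingredients you list it cannot be.

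After multiplying the gap inequality by $t_{k+1}$, the term pairing $y_{k+2}-y_{k+1}$ with $x^*$ is $-\frac{t_{k+1}}{s}\langle y_{k+2}-y_{k+1},x_{k+1}-x^*\rangle$. It is not an index-shifted copy of the $y_{k+1}-y_k$ expression. An energy at index $k+1$ can telescope only $-\frac{t_{k+1}-1}{s}\langle y_{k+2}-y_{k+1},x_{k+1}-x^*\rangle$ of it. The surplus $-\frac1s\langle y_{k+2}-y_{k+1},x_{k+1}-x^*\rangle$ is not controlled by Proposition~\ref{Prop: F_estimate} alone. (Treat the differences as independent vectors and consider any energy quadratic in $y_{k+1}-y_k$ and $x_k-x^*$. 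For large $k$, the negative-semidefiniteness conditions force the weight of $\|x_k-x^*\|^2$ to drop by a fixed positive amount over every two steps, so it eventually turns negative. No square completion works.)

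The missing idea is the extra estimate \eqref{E: key_inequality}. It follows from $y_{k+2}-y_{k+1}=(x_{k+1}-x_k)-s(\nabla f(x_{k+1})-\nabla f(x_k))$ and the descent lemma:
\[
\langle y_{k+2}-y_{k+1},x_{k+1}-x^*\rangle\ge\tfrac12\|x_{k+1}-x^*\|^2-\tfrac12\|x_k-x^*\|^2-s\psi_{k+1}+s\psi_k .
\]
This is exactly why $\psi$ cannot be discarded. The $+s\psi_{k+1}$ this produces is paid for by the $st_{k+1}\psi_{k+1}$ on the left-hand side, and $s(t_k-1)\psi_k$ must be part of the energy.

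Your candidate $u_k$ also fails already at $k=0$. It puts weight $t_k$ on $y_{k+1}-y_k$, so $u_0=(y_1-y_0)+(x_0-x^*)$, and then $E_0\le\|y_0-x^*\|^2$ is false in general. For instance, take $H=\R$, $f(x)=\frac12x^2$, $g$ the indicator of $[1,\infty)$, $s=1$ and $y_0=x^*=1$. Then $x_0=1$ and $y_1=0$, so $E_0\ge\|u_0\|^2=1>0$.

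The weight must be $t_k-1$, and the quadratic part is not a single square in $H$. The paper's energy is $st_k^2(F(x_k)-F^*)+s(t_k-1)(\eta_k+\psi_k)+R_k$ with
\[
R_k=\tfrac12\|(t_k-1)(y_{k+1}-y_k)+(x_k-x^*)\|^2+\tfrac{t_k-1}{2}\|y_{k+1}-y_k\|^2 ,
\]
which reduces to $\frac12\|x_0-x^*\|^2$ at $k=0$. With \eqref{E: key_inequality} in hand, the remaining quadratic terms are bounded by $-\frac12\bigl(1-\frac1{t_k}\bigr)\|t_k(y_{k+1}-y_k)-t_{k+1}(x_{k+1}-x_k)\|^2\le0$. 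This bound uses both $t_{k+1}(t_{k+1}-1)\le t_k^2$ and $t_{k+1}\ge t_k$.
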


Our proof relies on the following energy sequence $(E_k)_{k\ge 0}$:
\begin{equation}\label{E: E_k}
E_k = E_k(x^*) 
    := st_k^2 \left( F(x_k) - F^* \right)
      + s(t_k-1) \eta_k
      + s(t_k-1) \psi_k
      + R_k,
\end{equation}
where
$$ R_k = \frac{t_k(t_k-1)}{2} \| y_{k+1} - y_k \|^2
        + (t_k-1)\langle y_{k+1}-y_k, x_k-x^* \rangle 
        + \frac{1}{2} \| x_k-x^* \|^2.$$

\begin{remark}\label{Rem: R_k_bound}
Observe that $R_k\ge 0$, since $t_k\ge 1$ and
$$ R_k = \frac{1}{2}\left\| (t_k-1)(y_{k+1}-y_k) + (x_k-x^*) \right\|^2 + \frac{t_k-1}{2}\| y_{k+1}-y_k \|^2. $$
\end{remark}

The following result shall be useful in the forthcoming convergence proof.

\begin{lemma}[Lemma 2.3 in \cite{Beck_2009}]\label{Lem: prox_bound}
Let $F:=f+g$, where $f:H\to\R$ is convex and $L$-smooth; $g:H\to\mathbb{R}\cup\{\infty\}$ is convex, proper and lower-semicontinuous. Then, for every $s\in\left( 0, \frac{1}{L} \right]$ and $x,y\in H$,
$$ F\left( y-s G_s(y) \right) \le F(x) + \langle G_s(y), y-x \rangle - \frac{s}{2}\| G_s(y) \|^2, $$
where $G_s(y) = \frac{ y-\prox_{s g}\left( y-s\nabla f(y) \right) }{s}$.
\end{lemma}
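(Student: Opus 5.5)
The plan is to set $p:=\prox_{sg}\left(y-s\nabla f(y)\right)=y-sG_s(y)$ and combine three elementary inequalities: the descent lemma for $f$ at $y$, the gradient inequality for $f$ at $y$, and the subgradient inequality for $g$ at $p$. The resulting terms should cancel exactly, leaving the claimed bound. If $F(x)=+\infty$ there is nothing to prove, so we may assume $x\in\operatorname{dom} g$.

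\emph{Step 1: optimality condition.} The optimality condition defining the proximal point reads
$$\frac{y-s\nabla f(y)-p}{s}\in\partial g(p), \qquad\text{i.e.}\qquad G_s(y)-\nabla f(y)\in\partial g(p).$$

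\emph{Step 2: three inequalities.} Since $f$ is $L$-smooth and $L\le 1/s$, the descent lemma gives
$$f(p)\le f(y)+\langle\nabla f(y),p-y\rangle+\tfrac{1}{2s}\|p-y\|^2=f(y)-s\langle\nabla f(y),G_s(y)\rangle+\tfrac{s}{2}\|G_s(y)\|^2.$$
Convexity of $f$ gives
$$f(y)\le f(x)+\langle\nabla f(y),y-x\rangle.$$
Convexity of $g$, together with the subgradient found in Step 1, gives
$$g(p)\le g(x)+\langle G_s(y)-\nabla f(y),\,p-x\rangle, \qquad p-x=y-x-sG_s(y).$$

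\emph{Step 3: sum and cancel.} Adding the three inequalities, expand the last inner product as
$$\langle G_s(y),y-x\rangle-s\|G_s(y)\|^2-\langle\nabla f(y),y-x\rangle+s\langle\nabla f(y),G_s(y)\rangle.$$
Then the terms $\pm\langle\nabla f(y),y-x\rangle$ cancel, the terms $\pm s\langle\nabla f(y),G_s(y)\rangle$ cancel, and $\tfrac{s}{2}\|G_s(y)\|^2-s\|G_s(y)\|^2=-\tfrac{s}{2}\|G_s(y)\|^2$. What remains is
$$F(p)\le F(x)+\langle G_s(y),y-x\rangle-\tfrac{s}{2}\|G_s(y)\|^2,$$
which is the statement.

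There is no real obstacle in this argument. The only point requiring care is the bookkeeping of the cross terms, and the fact that the step-size restriction $s\le 1/L$ is used exactly once, to replace $L/2$ by $1/(2s)$ in the descent lemma.
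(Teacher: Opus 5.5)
Your proof is correct and is essentially the standard argument from \cite{Beck_2009}, which the paper cites without proof. You combine the descent lemma (using $L\le 1/s$), the gradient inequality for $f$ at $y$, and the subgradient inequality for $g$ at $p$ with $G_s(y)-\nabla f(y)\in\partial g(p)$, and your cancellations all check out.
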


\subsection{Proof of Theorem \ref{Thm: rate_cvx}}
Using \eqref{E: E_k}, we obtain
\begin{equation}\label{E: E_k_diff}
\begin{aligned}
E_{k+1} - E_k 
&= st_{k+1}^2 \left( F(x_{k+1}) - F^* \right)
  -st_k^2 \left( F(x_k) - F^* \right) 
  + R_{k+1}-R_k\\
&\quad + s[ (t_{k+1}-1)\eta_{k+1} - (t_k-1)\eta_k ]
  + s[ (t_{k+1}-1)\psi_{k+1} - (t_k-1)\psi_k].  
\end{aligned}
\end{equation} 
Setting $\lambda_{k+1} = \frac{t_k-1}{t_{k+1}}$ and $\mu=0$ in Proposition \ref{Prop: F_estimate}, we obtain
\begin{equation}\label{E: func_gap_C}
\begin{aligned}
&\quad st_{k+1}(t_{k+1}-1) \left( F(x_{k+1}) - F(x_k) \right) + \frac{t_{k+1}(t_{k+1}-1)}{2}\| y_{k+2} - y_{k+1} \|^2 \\
&\quad - \frac{t_k(t_k-1)}{2}\| y_{k+1} - y_k \|^2 \\ 
&\le - \frac{t_k(t_k-1)}{2}\| y_{k+1} - y_k \|^2
     - \frac{t_{k+1}(t_{k+1}-1)}{2}\| x_{k+1} - x_k \|^2 \\
&\quad  + (t_k-1)(t_{k+1}-1) \langle y_{k+1}-y_k, x_{k+1} - x_k \rangle, 
 \end{aligned}
\end{equation}  
and
\begin{equation}\label{E: func_value_C}
\begin{aligned}
& st_{k+1}\left( F(x_{k+1}) - F^* \right) + st_k\eta_{k+1} 
  + st_{k+1}\psi_{k+1} 
  + s[ (t_{k+1}-1)\eta_{k+1} - (t_k-1)\eta_k ] \\
&\le - t_{k+1}\langle y_{k+2}-y_{k+1}, x_{k+1}-x^* \rangle 
     + (t_k-1)\langle y_{k+1}-y_k, x_{k+1}-x^* \rangle \\
&= - (t_{k+1}-1)\langle y_{k+2}-y_{k+1}, x_{k+1}-x^* \rangle 
   + (t_k-1)\langle y_{k+1}-y_k, x_k-x^* \rangle \\
&\quad - \langle y_{k+2}-y_{k+1}, x_{k+1}-x^* \rangle
     + (t_k-1)\langle y_{k+1}-y_k, x_{k+1}-x_k \rangle.    
\end{aligned}
\end{equation}
For the second to last term in \eqref{E: func_value_C}, we have
\begin{align*}
&\quad \langle y_{k+2}-y_{k+1}, x_{k+1}-x^* \rangle \\
&= \langle x_{k+1}-x_k, x_{k+1}-x^* \rangle
  - s \langle \nabla f(x_{k+1})-\nabla f(x_k), x_{k+1}-x^* \rangle \\
&= \frac{1}{2}\| x_{k+1}-x_k \|^2 + \frac{1}{2}\| x_{k+1}-x^* \|^2 
   - \frac{1}{2}\| x_k-x^* \|^2 \\
&\quad -s\langle \nabla f(x_{k+1}), x_{k+1}-x^* \rangle
   + s \langle \nabla f(x_k), x_k-x^* \rangle
   + s \langle \nabla f(x_k), x_{k+1}-x_k \rangle \\
&\ge \frac{1}{2}\| x_{k+1}-x^* \|^2 - \frac{1}{2}\| x_k-x^* \|^2 
   -s\langle \nabla f(x_{k+1}), x_{k+1}-x^* \rangle \\
&\quad + s \langle \nabla f(x_k), x_k-x^* \rangle 
   + s( f(x_{k+1}) - f(x_k) ),
\end{align*}
since $f$ is $L$-smooth and $s\le \frac{1}{L}$ such that
$$ f(x_{k+1}) - f(x_k) \le \langle \nabla f(x_k), x_{k+1}-x_k \rangle + \frac{1}{2s}\| x_{k+1}-x_k \|^2. $$
This gives
\begin{equation}\label{E: key_inequality}
\langle y_{k+2}-y_{k+1}, x_{k+1}-x^* \rangle \ge \frac{1}{2}\| x_{k+1}-x^* \|^2 - \frac{1}{2}\| x_k-x^* \|^2 - s\psi_{k+1} + s \psi_k.
\end{equation}
Using \eqref{E: key_inequality} in \eqref{E: func_value_C}, we obtain
\begin{align*}
&\quad st_{k+1}\left( F(x_{k+1}) - F^* \right) 
  + s(t_{k+1}-1)\psi_{k+1} - s(t_k-1)\psi_k \\ 
&\quad + s[ (t_{k+1}-1)\eta_{k+1} - (t_k-1)\eta_k ] 
  + \left( \frac{1}{2}\| x_{k+1}-x^* \|^2 - \frac{1}{2}\| x_k-x^* \|^2 \right) \\
&\le - (t_{k+1}-1)\langle y_{k+2}-y_{k+1}, x_{k+1}-x^* \rangle 
   + (t_k-1)\langle y_{k+1}-y_k, x_k-x^* \rangle \\
&\quad + (t_k-1)\langle y_{k+1}-y_k, x_{k+1}-x_k \rangle
   - st_k\eta_{k+1} 
   - st_k\psi_k. 
\end{align*}
Using this inequality and \eqref{E: func_gap_C} in \eqref{E: E_k_diff} gives
\begin{equation}\label{E: E_k_diff_bound_temp}
\begin{aligned}
E_{k+1}-E_k  
&\le s[t_{k+1}(t_{k+1}-1)-t_k^2]\left( F(x_k) - F^* \right)
     - st_k\eta_{k+1} 
     - st_k\psi_k \\
&\quad - \frac{t_k(t_k-1)}{2}\| y_{k+1} - y_k \|^2 
   - \frac{t_{k+1}(t_{k+1}-1)}{2}\| x_{k+1} - x_k \|^2 \\ 
&\quad + (t_k-1)t_{k+1} \langle y_{k+1}-y_k, x_{k+1} - x_k \rangle\\
&\le - st_k\eta_{k+1} - st_k\psi_k 
   - \frac{1}{2}\left( 1 - \frac{1}{t_k} \right) \left\| t_k(y_{k+1}-y_k) - t_{k+1}(x_{k+1}-x_k) \right\|^2 \\ 
&\le 0,
\end{aligned}
\end{equation}
in view of $t_k^2 \ge t_{k+1}(t_{k+1}-1)$. It follows that
$$ F(x_k) - F^* \le \frac{E_k}{st_k^2} \le \frac{E_0}{st_k^2}, $$
with
$$ E_0 = s\left( F(x_0) - F^* \right) + \frac{1}{2}\| x_0 - x^* \|^2. $$
Setting $x=x^*$, $y=y_0$ and $s=\gamma_0$ in Lemma \ref{Lem: prox_bound}, we obtain
$$F(x_0) 
\le F^* + \frac{1}{s}\langle y_0-x_0, y_0-x^* \rangle - \frac{1}{2s}\| y_0-x_0 \|^2 
= F^* - \frac{1}{2s}\| x_0 - x^* \|^2 + \frac{1}{2s}\| y_0-x^* \|^2,$$
so that 
$$ F(x_0) - F^* + \frac{1}{2s}\| x_0-x^* \|^2 \le \frac{1}{2s}\| y_0-x^* \|^2.  $$
As a result, we obtain
$$ F(x_k) - F^* \le \frac{\| y_0-x^* \|^2}{2st_k^2}. $$
Since $t_k\to\infty$ as $k\to\infty$, the weak lower-semicontinuity of $F$ gives the minimizing property.\wbx

\begin{remark}\label{Rem: sequence_sum_limit}
From \eqref{E: E_k_diff_bound_temp}, we obtain
$$ st_k\eta_{k+1} + st_k\psi_k + \frac{1}{2}\left( 1 - \frac{1}{t_k} \right) \left\| t_k(y_{k+1}-y_k) - t_{k+1}(x_{k+1}-x_k) \right\|^2 \le E_k - E_{k+1}, $$
so that
$$ \sum_{k=0}^\infty t_k \eta_{k+1} < \infty,\quad \sum_{k=0}^\infty t_k\psi_k < \infty,\qbox{and} \sum_{k=0}^\infty \left\| t_k(y_{k+1}-y_k) - t_{k+1}(x_{k+1}-x_k) \right\|^2 < \infty. $$
This implies that
$$ \lim_{k\to\infty} t_k \eta_{k+1} 
= \lim_{k\to\infty} t_k\psi_k
= \lim_{k\to\infty} \left\| t_k(y_{k+1}-y_k) - t_{k+1}(x_{k+1}-x_k) \right\| = 0.$$
\end{remark}

\section{Boundedness and convergence of the iterates} \label{Sec: iterates}
We begin this section by establishing the following boundedness result:

\begin{proposition}\label{Prop: bounded_sequence}
The sequences $(x_k)_{k\ge 0}$, $(y_k)_{k\ge 0}$ and $(z_k)_{k\ge 0}$, generated by algorithm \eqref{Algo: APM}, are bounded. 
\end{proposition}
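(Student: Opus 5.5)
We will extract boundedness from the nonincreasing energy $E_k$ and then transfer it through the update rules. From \eqref{E: E_k_diff_bound_temp} we have $E_k\le E_0$ for every $k$. Each summand of $E_k$ is nonnegative: $F(x_k)-F^*\ge0$, $t_k\ge1$, $\eta_k,\psi_k\ge0$ by Proposition \ref{Prop: F_estimate}, and $R_k\ge 0$ by Remark \ref{Rem: R_k_bound}. The decomposition of Remark \ref{Rem: R_k_bound} then gives
$$ \tfrac12\left\|(t_k-1)(y_{k+1}-y_k)+(x_k-x^*)\right\|^2\le E_0,\qquad \tfrac{t_k-1}{2}\|y_{k+1}-y_k\|^2\le E_0 .$$
So $w_k:=(t_k-1)(y_{k+1}-y_k)+(x_k-x^*)$ is bounded. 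However, this alone does not bound $x_k$, since $(t_k-1)\|y_{k+1}-y_k\|$ is only controlled by $\sqrt{t_k}$.

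\textbf{Bounding $x_k$ (main obstacle).} The plan is to get a recursion for $x_k$ from the definition of $w_k$. Since $y_{k+1}-y_k=(x_k-x_{k-1})-s(\nabla f(x_k)-\nabla f(x_{k-1}))$, we can write
$$ x_k-x^*=w_k-(t_k-1)(y_{k+1}-y_k). $$
This is hard to close directly, so I would instead use Remark \ref{Rem: sequence_sum_limit}. It says $d_k:=t_k(y_{k+1}-y_k)-t_{k+1}(x_{k+1}-x_k)\to0$, and in particular $d_k$ is bounded. Substituting $t_k(y_{k+1}-y_k)=t_{k+1}(x_{k+1}-x_k)+d_k$ into $w_k$ gives
$$ x_k-x^*+\Big(1-\tfrac1{t_k}\Big)\big(t_{k+1}(x_{k+1}-x_k)+d_k\big)=w_k. $$
Setting $u_k=x_k-x^*$ and $\theta_k=(1-1/t_k)t_{k+1}$, this rearranges to
$$ \theta_k u_{k+1}=(\theta_k-1)u_k+b_k, $$
with $b_k=w_k-(1-1/t_k)d_k$ bounded by some $M$. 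When $\theta_k\ge1$, $u_{k+1}$ is a convex combination of $u_k$ and $b_k/(\theta_k-1)$-type terms, so we expect
$$ \|u_{k+1}\|\le \Big(1-\tfrac1{\theta_k}\Big)\|u_k\|+\tfrac{M}{\theta_k}\le\max(\|u_k\|,M). $$
By induction, $\|u_k\|\le\max(\|u_{k_0}\|,M)$ for all $k\ge k_0$. The technical point is checking $\theta_k\ge1$ for large $k$. This holds since $t_{k+1}\ge t_k$, $t_k\to\infty$, and $(1-1/t_k)t_{k+1}\ge t_k-1\ge1$ once $t_k\ge2$. Finitely many initial indices are trivially bounded. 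If this route hits a snag, the fallback is to derive a uniform bound on $(t_k-1)\|y_{k+1}-y_k\|$ via the same averaging identity.

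\textbf{Bounding $y_k$ and $z_k$.} For $y_k$: $y_{k+1}=x_k-s\nabla f(x_k)$, and $\nabla f$ is $L$-Lipschitz, so $\|\nabla f(x_k)\|\le\|\nabla f(x^*)\|+L\|x_k-x^*\|$ is bounded. Hence $y_k$ is bounded.

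For $z_k$: by \eqref{E: subgradient_iterate}, $v_k:=(z_k-x_k)/\gamma_k$ satisfies
$$ v_{k+1}=\frac{1}{1+\lambda_{k+1}}\cdot\frac{-(x_{k+1}-y_{k+1})+\lambda_{k+1}(y_{k+1}-y_k)}{s}+\frac{\lambda_{k+1}}{1+\lambda_{k+1}}v_k. $$
This is a convex combination of $v_k$ and a bounded quantity, since $x_k,y_k$ are bounded and $\lambda_{k+1}\in[0,1)$. Hence $\|v_{k+1}\|\le\max(\|v_k\|,C)$, so $v_k$ is bounded. Since $\gamma_k\le 2s$, it follows that $z_k=x_k+\gamma_kv_k$ is bounded.
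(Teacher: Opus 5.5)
Your proof is correct, but it bounds the iterates by a different mechanism than the paper.

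\textbf{The paper's route.} It never forms a recursion for $x_k$. Using $x_k=y_{k+1}+s\nabla f(x_k)$, it rewrites $\xi_k=(t_k-1)(y_{k+1}-y_k)+x_k$ as $y_{k+1}=(1-\frac1{t_k})y_k+\frac1{t_k}\big(\xi_k+s\nabla f(x_k)\big)$. It bounds $\nabla f(x_k)$ separately through the $\psi_k$ term of the energy and cocoercivity, $\frac1{2L}\|\nabla f(x_k)-\nabla f(x^*)\|^2\le\psi_k\le E_0/(s(t_k-1))$. This gives $(y_k)$ bounded first, and $x_k$ is then recovered from the same identity.

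\textbf{Your route.} You trade $(t_k-1)(y_{k+1}-y_k)$ for $\theta_k(x_{k+1}-x_k)$ using the dissipated residual $d_k$ from \eqref{E: E_k_diff_bound_temp}. This gives an averaging recursion directly on $x_k-x^*$. You then get $y_k$ from mere Lipschitz continuity of $\nabla f$. A small correction: the recursion makes $u_{k+1}$ a convex combination of $u_k$ and $b_k$ with weights $1-1/\theta_k$ and $1/\theta_k$. It is not a combination of ``$b_k/(\theta_k-1)$-type terms.'' Your displayed inequality is nonetheless right.

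\textbf{What each buys.} Your argument uses the dissipation term rather than just $E_k\le E_0$, and it only applies once $t_k\ge 2$. In exchange, it avoids the $\psi_k$/cocoercivity step. It is also the same substitution the paper uses later in its weak-convergence proof, where $(t_k-1)(y_{k+1}-y_k)$ is replaced by $\theta_k(x_{k+1}-x_k)$ up to a vanishing error, so your version ties the two arguments together. The paper's route instead yields $\nabla f(x_k)\to\nabla f(x^*)$ as a by-product, which it needs later in Remark \ref{Rem: grad_converges}.

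\textbf{The $z_k$ step.} Here you follow the paper, and you correctly use the upper bound $\gamma_k\le 2s$. The paper's text cites $\gamma_k\ge s$, which is not the bound that step requires.
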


\begin{proof}
Recalling Remark \ref{Rem: R_k_bound} and writing $\xi_k = (t_k-1)(y_{k+1}-y_k) + x_k$, we obtain
$$ R_k = \frac{1}{2}\| \xi_k - x^* \|^2 + \frac{t_k-1}{2}\| y_{k+1}-y_k \|^2. $$
Since the sequence $(E_k)_{k\ge 0}$ is nonincreasing, we have
$$ \frac{1}{2}\| \xi_k - x^* \|^2 \le E_0, $$
which implies that $\| \xi_k \| \le \| x^* \| + \sqrt{2E_0}=: M$. Using $x_k = y_{k+1}+s\nabla f(x_k)$, we obtain
$$ \xi_k = t_k y_{k+1} - (t_k-1)y_k + s\nabla f(x_k), $$
so that
$$ y_{k+1} = \left( 1 - \frac{1}{t_k} \right) y_k + \frac{1}{t_k}\xi_k + \frac{s}{t_k}\nabla f(x_k). $$ 
This gives
$$ \| y_{k+1} \| \le \left( 1 - \frac{1}{t_k} \right)\| y_k \| + \frac{M}{t_k} + \frac{1}{t_k}\| s\nabla f(x_k) \|. $$
Since $f$ is convex and $L$-smooth, we have
\begin{equation}\label{E: gradient_f}
\frac{1}{2L}\| \nabla f(x_k) - \nabla f(x^*) \|^2 \le \psi_k \le \frac{E_k}{s(t_k-1)} \le \frac{E_0}{s(t_k-1)},\ \forall\, k\ge 1,
\end{equation}
which implies that $\nabla f(x_k)\to\nabla f(x^*)$ as $k\to\infty$, and that $\| \nabla f(x_k) \| \le B$ for all $k\ge 0$ and some $B>0$. As a result,
$$ \| y_{k+1} \| 
\le \max\left\{ \| y_k \|, M, sB \right\}
\le \max\left\{ \| y_0 \|, M, sB \right\}, $$
and so $(y_k)_{k\ge 0}$ is bounded. Since $x_k = y_{k+1} + s\nabla f(x_k)$, we have
$$ \| x_k \| \le \| y_{k+1} \| + s\| \nabla f(x_k) \|, $$
so that $(x_k)_{k\ge 0}$ is also bounded. Notice that
$$ \| -(x_{k+1}-y_{k+1}) + \lambda_{k+1} (y_{k+1}-y_k) \| \le \| x_{k+1} \| + \| y_{k+1} \| + \| y_{k+1}-y_k \| \le D<\infty,$$
for some $D>0$. Using \eqref{E: subgradient_iterate}, we arrive at
\begin{align*}
\left\| \frac{z_{k+1}-x_{k+1}}{\gamma_{k+1}} \right\| 
&\le \frac{ D }{(1 + \lambda_{k+1})s} + \left( \frac{\lambda_{k+1}}{1 + \lambda_{k+1}} \right) \left\| \frac{z_k-x_k}{\gamma_k} \right\| \\
&\le \max\left\{ \left\| \frac{z_k-x_k}{\gamma_k} \right\|, \frac{D}{s}  \right\}.
\end{align*}
This implies that $\left\| \frac{z_k-x_k}{\gamma_k} \right\|$ is also bounded. Since $(x_k)$ is bounded and $\gamma_k\ge s$ for all $k$, $(z_k)$ must be bounded as well.
\end{proof}

\begin{remark} \label{Rem: yk_regular}
Using Remark \ref{Rem: R_k_bound}, and the fact that the sequence $(E_k)_{k\ge 0}$ is nonincreasing, we deduce that
$$ \frac{t_k-1}{2}\| y_{k+1}-y_k \|^2 \le R_k \le E_k \le E_0.$$
This shows that $\lim_{k\to\infty}\|y_{k+1}-y_k\|=0$.
\end{remark}

\begin{remark} \label{Rem: grad_converges}
In view of \eqref{E: gradient_f}, as $t\to\infty$, $\nabla f(x_k)$ converges (strongly) to $\nabla f(x^*)$. This does not depend on the $x^*\in\mathcal S$ chosen, as seen in Remark \ref{Remark:nablaf_constant}. 
\end{remark}

\subsection{Weak convergence of the iterates}
The following results will be used to prove the convergence of the sequences generated by \eqref{Algo: APM}:

\begin{lemma}[Lemma A.4 in \cite{Radu_2025}]\label{Lem: convergence_limit}
Let $(u_k)_{k\ge 0}$ be a real sequence and $(\zeta_k)_{k\ge 0}$ be positive such that $\sum_{k=0}^{\infty}\frac{1}{\zeta_k}=\infty$. If
$ \lim_{k\to\infty}[ u_{k+1} + \zeta_k( u_{k+1} - u_k ) ] = b\in\R, $
then,
$ \lim_{k\to\infty}u_k = b. $ 
\end{lemma}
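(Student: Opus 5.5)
We must prove the statement as given: if $(u_k)$ is real, $\zeta_k>0$, $\sum_k 1/\zeta_k=\infty$, and $w_k:=u_{k+1}+\zeta_k(u_{k+1}-u_k)\to b$, then $u_k\to b$.

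The plan is to rewrite the hypothesis as a convex-combination recursion and show that the error contracts by a divergent amount. Solving for $u_{k+1}$ gives
$$ u_{k+1} = \frac{\zeta_k}{1+\zeta_k}\,u_k + \frac{1}{1+\zeta_k}\,w_k, $$
so $u_{k+1}$ is a convex combination of $u_k$ and $w_k$. Set $\theta_k:=\frac{1}{1+\zeta_k}\in(0,1)$ and $v_k:=u_k-b$. Subtracting $b$ from both sides yields
$$ v_{k+1}=(1-\theta_k)v_k+\theta_k\,\varepsilon_k,\qquad \varepsilon_k:=w_k-b\to 0. $$

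First we check that $\sum_k\theta_k=\infty$. If $\zeta_k\ge 1$ along a subsequence of $k$, then on those indices $\theta_k\ge \frac{1}{2\zeta_k}$. On the remaining indices $\zeta_k<1$, so $\theta_k>1/2$. Hence, whichever kind of indices occurs infinitely often contributes a divergent amount: either infinitely many terms exceed $1/2$, or eventually $\theta_k\ge\frac{1}{2\zeta_k}$ and the hypothesis $\sum 1/\zeta_k=\infty$ applies. In either case $\sum_k\theta_k=\infty$, and therefore $\prod_{j}(1-\theta_j)=0$.

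Next comes the standard Robbins--Siegmund/Polyak-type argument. Fix $\epsilon>0$ and choose $K$ with $|\varepsilon_k|\le\epsilon$ for all $k\ge K$. By the triangle inequality, $|v_{k+1}|\le(1-\theta_k)|v_k|+\theta_k\epsilon$. Subtracting $\epsilon$ gives $|v_{k+1}|-\epsilon\le(1-\theta_k)(|v_k|-\epsilon)$. Iterating this for $k\ge K$ gives
$$ |v_{n}|-\epsilon\le \Big(\prod_{j=K}^{n-1}(1-\theta_j)\Big)\,\max\{|v_K|-\epsilon,0\}. $$
The product tends to $0$, so $\limsup_n |v_n|\le\epsilon$. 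Since $\epsilon>0$ was arbitrary, $u_k\to b$.

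The only delicate point is the divergence of $\sum\theta_k$ when $\zeta_k$ is not bounded below. The case split above handles this. Everything else is a routine contraction estimate.
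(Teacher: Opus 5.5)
The paper gives no proof of this lemma; it quotes it from \cite{Radu_2025}, so there is no in-paper argument to match. Your proof is correct and complete. The rewriting $v_{k+1}=(1-\theta_k)v_k+\theta_k\varepsilon_k$ with $\theta_k=1/(1+\zeta_k)\in(0,1)$ is right. The dichotomy you state (infinitely many $\zeta_k<1$, or eventually $\zeta_k\ge 1$) is exhaustive and gives $\sum_k\theta_k=\infty$. The shifted inequality $|v_{k+1}|-\epsilon\le(1-\theta_k)(|v_k|-\epsilon)$ iterates correctly because $1-\theta_k>0$.

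Two comparative remarks. First, the step you call delicate needs no case split. Since $(1-\theta_j)^{-1}=1+\zeta_j^{-1}$, expanding the product gives $\prod_{j=K}^{n-1}(1-\theta_j)^{-1}\ge 1+\sum_{j=K}^{n-1}\zeta_j^{-1}\to\infty$, so the product vanishes directly. This is also the quantity behind the other standard proof. Set $P_0=1$ and $P_{k+1}=P_k(1+\zeta_k^{-1})$, so that $1/P_n=\prod_{j<n}(1-\theta_j)$. The hypothesis then becomes the telescoping identity $P_{k+1}u_{k+1}=P_ku_k+(P_{k+1}-P_k)w_k$. Hence $u_n=P_n^{-1}\bigl(u_0+\sum_{k<n}(P_{k+1}-P_k)w_k\bigr)$ with positive weights whose sum $P_n-1$ tends to $\infty$, and Stolz--Ces\`aro finishes. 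Your contraction estimate is essentially the $\epsilon$-form of that averaging argument.

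Second, your argument uses only the triangle inequality. It therefore runs verbatim in $H$ with $\|\cdot\|$ in place of $|\cdot|$, and proves the vector version (Lemma \ref{Lem: convergence_vector}) directly. That makes the paper's reduction to the scalar case via the reverse triangle inequality unnecessary.
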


\begin{lemma} \label{Lem: convergence_vector}
Let $(W_k)_{k\ge 0}$ be a sequence in $H$, and let $(\zeta_k)_{k\ge 0}$ be a positive sequence such that $\sum_{k=0}^{\infty}\frac{1}{\zeta_k}=\infty$. If
$\lim_{k\to\infty}\big[W_{k+1} + \zeta_k( W_{k+1} - W_k )\big]=W\in H$, then  $\lim_{k\to\infty}W_k=W$.
\end{lemma}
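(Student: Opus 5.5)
The natural route is to reduce Lemma \ref{Lem: convergence_vector} to the scalar Lemma \ref{Lem: convergence_limit}. We cannot simply apply the scalar lemma to the coordinates $\langle W_k, e\rangle$, because that would only give weak convergence. Instead, we will work with the differences $V_k := W_k - W$ and show directly that $\|V_k\|\to 0$.

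First, observe that $W_{k+1} + \zeta_k(W_{k+1}-W_k) - W = (1+\zeta_k)V_{k+1} - \zeta_k V_k =: \varepsilon_k$, where $\varepsilon_k\to 0$ by hypothesis. Solving for $V_{k+1}$ gives
$$ V_{k+1} = \frac{\zeta_k}{1+\zeta_k} V_k + \frac{1}{1+\zeta_k}\varepsilon_k. $$
This expresses $V_{k+1}$ as a convex combination of $V_k$ and $\varepsilon_k$. By the triangle inequality, the nonnegative reals $a_k:=\|V_k\|$ then satisfy
$$ a_{k+1} \le \frac{\zeta_k}{1+\zeta_k} a_k + \frac{1}{1+\zeta_k}\|\varepsilon_k\|. $$

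The second step is to show that this inequality forces $a_k\to 0$ whenever $\sum 1/\zeta_k=\infty$. Set $\theta_k := \frac{1}{1+\zeta_k}\in(0,1)$. There are two cases.
\begin{itemize}
\item If $\zeta_k$ is bounded along a subsequence, then $\theta_k$ is bounded below there, so $\sum\theta_k=\infty$.
\item Otherwise $\zeta_k\to\infty$, so $\theta_k\sim 1/\zeta_k$ and again $\sum\theta_k=\infty$.
\end{itemize}
Now fix $\epsilon>0$ and choose $K$ such that $\|\varepsilon_k\|\le\epsilon$ for $k\ge K$. Then $a_{k+1}-\epsilon \le (1-\theta_k)(a_k-\epsilon)$. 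Iterating, $(a_k-\epsilon)^+ \le \prod_{j=K}^{k-1}(1-\theta_j)\,(a_K-\epsilon)^+$, and the product tends to $0$ because $\sum\theta_j=\infty$. Hence $\limsup a_k\le\epsilon$ for every $\epsilon>0$, so $a_k\to 0$, that is, $W_k\to W$ strongly.

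An alternative would be to invoke Lemma \ref{Lem: convergence_limit} with $u_k=a_k$. That lemma, however, requires an equality with a limit rather than the inequality we have, so the direct product argument above is cleaner. I expect the only delicate point to be the verification that $\sum\theta_k=\infty$ follows from $\sum 1/\zeta_k=\infty$, which the case split above handles. Everything else is routine rearrangement.
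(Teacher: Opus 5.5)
Your proof is correct, but it takes a different route from the paper's.

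The paper sets $u_k=\|W_k-W\|$ and uses the reverse triangle inequality in both directions:
$\bigl|u_{k+1}+\zeta_k(u_{k+1}-u_k)\bigr| = \bigl|\|(1+\zeta_k)(W_{k+1}-W)\|-\|\zeta_k(W_k-W)\|\bigr|\le\|\varepsilon_k\|\to 0$.
So the scalar expression actually converges to $b=0$, and Lemma \ref{Lem: convergence_limit} applies verbatim. Your remark that the scalar lemma needs a limit rather than an inequality therefore undersells that alternative. The bound you wrote is only the upper half of what the triangle inequality gives. The lower half, $a_{k+1}+\zeta_k(a_{k+1}-a_k)\ge-\|\varepsilon_k\|$, is exactly what turns it into the limit statement the lemma requires.

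Your argument keeps only the upper bound, which suffices because $a_k\ge 0$. You recast it as the convex-combination recursion $a_{k+1}\le(1-\theta_k)a_k+\theta_k\|\varepsilon_k\|$ and close with a self-contained product estimate. The case split correctly derives $\sum\theta_k=\infty$ from $\sum 1/\zeta_k=\infty$, and the passage to positive parts and the bound on $\prod_j(1-\theta_j)$ are sound.

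The two approaches trade off as follows. The paper's proof is two lines but outsources the real work to the cited scalar lemma. Yours effectively reproves the special case $b=0$ of that lemma for nonnegative sequences. It is fully elementary, and it makes transparent that the divergence hypothesis is precisely what forces $\prod_j(1-\theta_j)\to 0$.
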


\begin{proof}
Write $u_k=\|W_k-W\|$. By the (reverse) triangle inequality, we know that
\begin{align*}
    \big|u_{k+1} + \zeta_k( u_{k+1} - u_k )\big| & = \Big|\big\|(1+\zeta_k)(W_{k+1}-W)\|- \big\|\zeta_k (W_k-W)\big\| \Big| \\
    & \le \Big\|W_{k+1} + \zeta_k( W_{k+1} - W_k )-W\Big\|.
\end{align*}
Since the right-hand side goes to zero, Lemma \ref{Lem: convergence_limit} shows that $\lim_{k\to\infty}\|W_k-W\|= 0$.
\end{proof}

Now we are in a position to prove the weak convergence of the iterates.

\begin{theorem}
Consider algorithm \eqref{Algo: APM}, defined as in Theorem \ref{Thm: rate_cvx}. Then, $x_k$ converges weakly, as $k\to\infty$, to a minimizer $x^*$ of $F$. Moreover, $y_k$ and $z_k$ converge weakly, as $k\to\infty$, to $y^*:=x^*-s\nabla f(x^*)$ and $z^*:=x^*-2s\nabla f(x^*)$, respectively.  
\end{theorem}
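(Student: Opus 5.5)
We prove the weak convergence of $(x_k)$ with Opial's lemma. Theorem \ref{Thm: rate_cvx} already shows that every weak subsequential limit point of $(x_k)$ lies in $\mathcal S$. It therefore remains to show that $\lim_{k\to\infty}\|x_k-x^*\|$ exists for every $x^*\in\mathcal S$. Once this holds, Opial's lemma gives $x_k\rightharpoonup \bar x\in\mathcal S$.

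The convergence of $y_k$ and $z_k$ then follows from the update rules. By Remark \ref{Rem: grad_converges}, $\nabla f(x_k)\to\nabla f(\bar x)$ strongly, so $y_{k+1}=x_k-s\nabla f(x_k)\rightharpoonup \bar x-s\nabla f(\bar x)=y^*$. For $z_k$, rewrite \eqref{E: subgradient_iterate} in the variable $w_k:=\frac{z_k-x_k}{\gamma_k}$:
$$w_{k+1}+\lambda_{k+1}(w_{k+1}-w_k)=\frac{-(x_{k+1}-y_{k+1})+\lambda_{k+1}(y_{k+1}-y_k)}{s}.$$
The right-hand side converges strongly to $-\nabla f(\bar x)$, because $x_{k+1}-y_{k+1}=s\nabla f(x_k)+(x_{k+1}-x_k)$ and $\|y_{k+1}-y_k\|\to 0$ by Remark \ref{Rem: yk_regular}. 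It remains to check $\|x_{k+1}-x_k\|\to0$. Remark \ref{Rem: sequence_sum_limit} gives $t_k(y_{k+1}-y_k)-t_{k+1}(x_{k+1}-x_k)\to0$, and $t_k\|y_{k+1}-y_k\|$ is bounded by Remark \ref{Rem: R_k_bound} (more precisely $\sqrt{t_k}\|y_{k+1}-y_k\|$ is bounded). Hence $x_{k+1}-x_k=O(1/\sqrt{t_k})\to0$. Since $\lambda_{k+1}\le1$, we have $\sum 1/\lambda_{k+1}=\infty$, and Lemma \ref{Lem: convergence_vector} with $\zeta_k=\lambda_{k+1}$ gives $w_k\to-\nabla f(\bar x)$ strongly. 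Since $\gamma_k\to2s$, it follows that $z_k=x_k+\gamma_kw_k\rightharpoonup \bar x-2s\nabla f(\bar x)=z^*$.

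The main obstacle is the existence of $\lim\|x_k-x^*\|$. The energy $E_k(x^*)$ is nonincreasing and bounded below, so it has a limit. In $E_k$, the terms $st_k^2(F(x_k)-F^*)$, $s(t_k-1)\eta_k$ and $s(t_k-1)\psi_k$ do not obviously vanish, so I would instead work with a more direct quantity. Set $h_k:=\tfrac12\|x_k-x^*\|^2$ and use \eqref{E: key_inequality} in the form
$$\langle y_{k+2}-y_{k+1},x_{k+1}-x^*\rangle\ge h_{k+1}-h_k-s\psi_{k+1}+s\psi_k.$$
Next, combine this with the inequality preceding \eqref{E: E_k_diff_bound_temp}, or with $R_k$, to obtain a relation of the form $u_{k+1}+\zeta_k(u_{k+1}-u_k)\to b$. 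Here $u_k$ is built from $h_k$, and $\zeta_k\sim t_k-1$ (with $\sum 1/t_k=\infty$ because $t_k\sim mk/2$). Concretely, I would expand
$$R_k=\tfrac{t_k(t_k-1)}2\|y_{k+1}-y_k\|^2+(t_k-1)\langle y_{k+1}-y_k,x_k-x^*\rangle+h_k$$
and use $y_{k+1}-y_k=(x_k-x_{k-1})-s(\nabla f(x_k)-\nabla f(x_{k-1}))$ to replace the cross term by $(t_k-1)(h_k-h_{k-1})$ plus errors. These errors are controlled by $t_k\psi_k\to0$, by $t_k\|\nabla f(x_k)-\nabla f(x^*)\|^2\to0$ (from \eqref{E: gradient_f}), and by the summability statements in Remark \ref{Rem: sequence_sum_limit}.

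The delicate point is showing that $st_k^2(F(x_k)-F^*)+\tfrac{t_k(t_k-1)}2\|y_{k+1}-y_k\|^2$ differs from an $x^*$-independent convergent quantity by $o(1)$. I would handle this with the standard trick. Take two points $x^*_1,x^*_2\in\mathcal S$. By Remark \ref{Remark:nablaf_constant}, $\nabla f(x_1^*)=\nabla f(x_2^*)$, so the difference $E_k(x_1^*)-E_k(x_2^*)$ is affine in $x_k$ and $y_{k+1}-y_k$ up to terms that vanish. This lets us apply Lemma \ref{Lem: convergence_limit} to $\langle x_k,x_1^*-x_2^*\rangle$, which is the form of Opial's argument actually needed.
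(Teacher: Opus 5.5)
Your proposal is correct in outline. For $(x_k)$ it ends up being the paper's argument, but drop the opening Opial framing: neither you nor the paper ever shows that $\lim_k\|x_k-x^*\|$ exists, and it is not needed.

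The paper works from the start with two weak cluster points $\xi^*,\widetilde\xi^*$ and $U_k=E_k(\xi^*)-E_k(\widetilde\xi^*)$. In this difference, $st_k^2(F(x_k)-F^*)$ and $\frac{t_k(t_k-1)}{2}\|y_{k+1}-y_k\|^2$ cancel identically. The $\eta$- and $\psi$-terms vanish \emph{separately} for each minimizer by Remark~\ref{Rem: sequence_sum_limit}, so the constancy of $\nabla f$ on $\mathcal S$ plays no role there.

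The step you leave implicit is turning $(t_k-1)\langle y_{k+1}-y_k,\xi^*-\widetilde\xi^*\rangle$ into increments of $\langle x_k,\xi^*-\widetilde\xi^*\rangle$. The paper uses the third limit in Remark~\ref{Rem: sequence_sum_limit}, $t_k(y_{k+1}-y_k)-t_{k+1}(x_{k+1}-x_k)\to0$. This gives forward increments with $\zeta_k=t_{k+1}-\frac{t_{k+1}}{t_k}-1$.

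Your identity $y_{k+1}-y_k=(x_k-x_{k-1})-s(\nabla f(x_k)-\nabla f(x_{k-1}))$ also works, with $\zeta_k=t_{k+1}-1$ after an index shift. It does not work with the error control you list, however. The bound $t_k\|\nabla f(x_k)-\nabla f(x^*)\|^2\to0$ only gives $(t_k-1)\|\nabla f(x_k)-\nabla f(x_{k-1})\|=o(\sqrt{t_k})$. What closes the argument is the directional estimate
\[
t_k\bigl[\psi_k(\xi^*)-\psi_k(\widetilde\xi^*)\bigr]=t_k\bigl[\langle\nabla f(x_k),\widetilde\xi^*-\xi^*\rangle+f(\xi^*)-f(\widetilde\xi^*)\bigr]\to0,
\]
which follows from $t_k\psi_k\to0$ for each minimizer. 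Apply it at indices $k$ and $k-1$, noting that $t_k-1\le t_{k-1}$.

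For $(z_k)$ you take a genuinely different route. You apply Lemma~\ref{Lem: convergence_vector} to $w_k=\frac{z_k-x_k}{\gamma_k}$ through \eqref{E: iterates}. This gives the extra information that the subgradients $w_k\in\partial g(x_k)$ converge strongly to $-\nabla f(\bar x)$. The cost is that you need $\|x_{k+1}-x_k\|\to0$. That follows at once from Remark~\ref{Rem: sequence_sum_limit}, $t_k\le t_{k+1}$ and $\|y_{k+1}-y_k\|\to0$. Your claim that $t_k\|y_{k+1}-y_k\|$ is bounded is not available, but it is also not needed.

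The paper instead applies the lemma to $W_k=z_k-y_k$. Its recursion involves only $y_{k+1}-y_k$ and $\nabla f(x_k)$, so no information on the $x$-increments is required.
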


\begin{proof}
By Theorem \ref{Thm: rate_cvx}, every weak subsequential limit point of $x_k$, as $k\to\infty$, is a minimizer of $F$. Recalling that from Proposition \ref{Prop: bounded_sequence}, $(x_k)_{k\ge 0}$ is bounded, we only need to prove that there cannot be two such limit points. To this end, suppose that $x_{m_k}\rightharpoonup \xi^*$ and $x_{n_k}\rightharpoonup\widetilde{\xi}^*$, as $k\to\infty$. Then $\xi^*,\widetilde{\xi}^*\in\argmin(F)$. Our objective is to show $\xi^* = \widetilde{\xi}^*$. Define
$$ U_k = E_k(\xi^*) - E_k(\widetilde{\xi}^*). $$
Since $(E_k)_{k\ge 0}$ is nonincreasing and bounded from below, $\lim_{k\to\infty} U_k$ exists. From Remark \ref{Rem: sequence_sum_limit}, we know that    
$$ \lim_{k\to\infty} t_k \eta_{k+1} =  \lim_{k\to\infty} t_k\psi_k = \lim_{k\to\infty} \left\| t_k(y_{k+1}-y_k) - t_{k+1}(x_{k+1}-x_k) \right\| = 0. $$
From the definition of $E_k$ in \eqref{E: E_k}, we obtain
\begin{align*}
 \lim_{k\to\infty} U_k 
&= \lim_{k\to\infty}\left[ -(t_k-1)\langle y_{k+1}-y_k, \xi^* - \widetilde{\xi}^* \rangle + \frac{1}{2}\left\| x_k - \xi^* \right\|^2 - \frac{1}{2}\left\| x_k - \widetilde{\xi}^* \right\|^2  \right] \\
&= \lim_{k\to\infty}\left[ -\left(t_{k+1}-\frac{t_{k+1}}{t_k}\right)\langle x_{k+1}-x_k, \xi^* - \widetilde{\xi}^* \rangle + \frac{1}{2}\left\| x_k - \xi^* \right\|^2 - \frac{1}{2}\left\| x_k - \widetilde{\xi}^* \right\|^2 \right].
\end{align*}
Defining
$$ u_k = \frac{1}{2}\left\| x_k - \xi^* \right\|^2 - \frac{1}{2}\left\| x_k - \widetilde{\xi}^* \right\|^2 = -\left\langle x_k, \xi^* - \widetilde{\xi}^* \right\rangle + \frac{1}{2}\| \xi^* \|^2 - \frac{1}{2}\left\| \widetilde{\xi}^* \right\|^2, $$
we get
\begin{align*}
\lim_{k\to\infty} U_k 
&= \lim_{k\to\infty}\left[ \left(t_{k+1}-\frac{t_{k+1}}{t_k}\right)(u_{k+1}-u_k) + u_k \right] \\
&= \lim_{k\to\infty}\left[ u_{k+1} + \left(t_{k+1}-\frac{t_{k+1}}{t_k}-1\right)(u_{k+1}-u_k) \right].
\end{align*}
Invoking Lemma \ref{Lem: convergence_limit}, we deduce that $\lim_{k\to\infty}u_k$ exists, and equals $\lim_{k\to\infty}U_k$. Replacing $k$ by $m_k$, and then by $n_k$ in the definition of $u_k$, and taking limits, we obtain
$$ -\frac{1}{2}\left\| \xi^* - \widetilde{\xi}^* \right\|^2 = \lim_{k\to\infty} u_k = \frac{1}{2}\left\| \xi^* - \widetilde{\xi}^* \right\|^2, $$
which implies that $\xi^* = \widetilde{\xi}^*$. This means that $x_k$ converges weakly to a minimizer of $F$, which we denote by $x^*$. Since $y_{k+1}=x_k-s\nabla f(x_k)$, Remark \ref{Rem: grad_converges} shows that $y_k$ converges weakly to $y^*=x^*-s\nabla f(x^*)$. For $z_k$, we rewrite the second line of \eqref{Algo: APM} as
$$ (1+\lambda_{k+1})(z_{k+1}-y_{k+1}) = \lambda_{k+1}(1+\lambda_{k+1})(y_{k+1}-y_k) + \lambda_{k+1}(z_k-x_k),  $$
which is equivalent to
\begin{align*}
(1+\lambda_{k+1})(z_{k+1}-y_{k+1})-\lambda_{k+1}(z_k-y_k) 
& = \lambda_{k+1}^2(y_{k+1}-y_k)+\lambda_{k+1}(y_{k+1}-x_k) \\
& = \lambda_{k+1}^2(y_{k+1}-y_k)-\lambda_{k+1}s\nabla f(x_k),
\end{align*}
where we have used $y_{k+1}-x_k=-s\nabla f(x_k)$. As a consequence of Remarks \ref{Rem: yk_regular} and \ref{Rem: grad_converges}, the right-hand side converges to $W:=-s\nabla f(x^*)$ as $k\to\infty$. Using Lemma \ref{Lem: convergence_vector} with $W_k:=z_k-y_k$, we deduce that $z_k$ converges weakly, as $k\to\infty$, to $z^*=y^*+W=x^*-2s\nabla f(x^*)$.
\end{proof}

\section{Convergence rate in the strongly convex case}\label{Sec: strong convexity}
In this section, we establish an accelerated linear convergence rate for \eqref{Algo: APM} with
$$ \lambda_{k+1} = \frac{1-\theta}{1+\theta},\quad \theta=\sqrt{\mu s},\quad s\in\left( 0,\frac{1}{L} \right], $$
when $f$ is $\mu$-strongly convex for some $\mu>0$. Algorithm \eqref{Algo: APM} has not been studied in this context before. Our analysis centers around the following energy sequence $(E_k)_{k\ge 0}$:
\begin{equation}\label{E: E_k_SC}
E_k = F(x_k) - F^* + \frac{\theta}{1+\theta}\eta_k + \frac{\theta}{1+\theta}\psi_k + R_k,
\end{equation}
where
$$ R_k = \frac{\| y_{k+1} - y_k \|^2}{2s(1+\theta)} + \frac{\theta}{s(1+\theta)}\langle y_{k+1}-y_k, x_k-x^* \rangle + \frac{\theta^2 \| x_k - x^* \|^2 }{2s(1+\theta)}. $$

\begin{remark}
Notice that
$$R_k = \frac{1}{2s(1+\theta)}\| (y_{k+1}-y_k) + \theta(x_k-x^*) \|^2 \ge 0.$$
Hence, we have $E_k \ge 0$ and $F(x_k)-F^*\le E_k$.
\end{remark}

The following result shall be useful in the forthcoming convergence proof.

\begin{lemma}\label{Lem: quadratic_positive}
Let $a,c\ge 0$ and $b\in\mathbb{R}$ satisfy $b^2\le 4ac$. Then, $a\| \xi \|^2 + b\langle \xi,\zeta \rangle + c\| \zeta \|^2 \ge 0$ for every $\xi,\zeta\in H$.
\end{lemma}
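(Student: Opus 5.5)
The plan is a case split on the sign of the coefficients. In the main case we complete the square, and the degenerate cases are handled directly.

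\emph{Case $a>0$.} Here we rewrite
\[
a\|\xi\|^2+b\langle\xi,\zeta\rangle+c\|\zeta\|^2
= a\Big\|\xi+\frac{b}{2a}\zeta\Big\|^2+\Big(c-\frac{b^2}{4a}\Big)\|\zeta\|^2 .
\]
This identity follows by expanding the square. The first term is nonnegative. The coefficient $c-\frac{b^2}{4a}=\frac{4ac-b^2}{4a}$ is also nonnegative, because $b^2\le 4ac$ and $a>0$. Hence the whole expression is nonnegative.

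\emph{Case $c>0$.} This case is symmetric: we complete the square in $\zeta$ instead, writing the expression as
\[
c\Big\|\zeta+\frac{b}{2c}\xi\Big\|^2+\Big(a-\frac{b^2}{4c}\Big)\|\xi\|^2 \ge 0 .
\]

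\emph{Case $a=c=0$.} The hypothesis gives $b^2\le 0$, so $b=0$. The expression is then identically zero, hence nonnegative.

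An alternative that avoids the case split is a Cauchy--Schwarz argument. We have $b\langle\xi,\zeta\rangle\ge -|b|\,\|\xi\|\,\|\zeta\|$, so the expression is at least the real quadratic form $a u^2-|b|uv+cv^2$ with $u=\|\xi\|\ge0$ and $v=\|\zeta\|\ge0$. This form is positive semidefinite on $\R^2$ because $a,c\ge0$ and its discriminant $b^2-4ac\le0$. There is no real obstacle in this lemma; the only point needing care is the degenerate case $a=0$ (or $c=0$), where we cannot divide, and the case split above handles it.
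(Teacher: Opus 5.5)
Your proof is correct and matches the paper's: complete the square when $a>0$, and in the degenerate case use $b^2\le 4ac$ to force $b=0$. The paper handles $a=0$ in a single step ($b=0$, so the expression reduces to $c\|\zeta\|^2\ge 0$), which makes your separate $c>0$ case unnecessary, though harmless.
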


\begin{proof}
Let $W(\xi,\zeta) = a\| \xi \|^2 + b\langle \xi,\zeta \rangle + c\| \zeta \|^2$. If $a=0$, then $b=0$, and so $W(\xi,\zeta) = c\| \zeta \|^2 \ge 0$. On the other hand, if $a> 0$, we have $W(\xi,\zeta) = a \left\| \xi + \frac{b\zeta}{2a} \right\|^2 + \frac{4ac-b^2}{4a}\| \zeta \|^2 \ge 0$. 
\end{proof}

The linear convergence result is as follows:

\begin{theorem}
Let $F=f+g$, where $f:H\to\mathbb{R}$ is $\mu$-strongly convex and $L$-smooth, and $g:H\to\mathbb{R}\cup\{\infty\}$ is convex, proper and lower-semicontinuous. Let $(x_k)_{k\ge 0}$, $(y_k)_{k\ge 0}$ and $(z_k)_{k\ge 0}$ be generated by algorithm \eqref{Algo: APM}, where
$$ s\in\left( 0,\frac{1}{L} \right],\quad \lambda_{k+1} = \frac{1-\theta}{1+\theta},\quad \theta = \sqrt{\mu s}, $$ 
and initial conditions
$$ \gamma_0 = s,\quad z_0\in H,\quad x_0=\prox_{\gamma_0g}(z_0),\quad y_0=x_0-s\nabla f(x_0).$$
For every $k\ge 0$, we have
$$ F(x_k) - F^* \le (1-\theta)^k\left[ F(x_0) - F^* + \frac{\theta}{1+\theta}\eta_0 + \frac{\theta}{2s}\| x_0 - x^* \|^2 \right]. $$
\end{theorem}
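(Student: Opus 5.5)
The plan is to show that the energy \eqref{E: E_k_SC} contracts, namely $E_{k+1}\le (1-\theta)E_k$ for every $k\ge 0$. Then $F(x_k)-F^*\le E_k\le (1-\theta)^kE_0$. It remains to evaluate $E_0$ with the given initialization. Since $y_0=x_0-s\nabla f(x_0)$ and $y_1=x_0-s\nabla f(x_0)$, we get $y_1-y_0=0$. Hence $R_0=\frac{\theta^2}{2s(1+\theta)}\|x_0-x^*\|^2$. The $\psi_0$ term needs to be absorbed. By strong convexity, $f(x^*)\ge f(x_0)+\langle\nabla f(x_0),x^*-x_0\rangle+\frac{\mu}{2}\|x_0-x^*\|^2$, so $\psi_0\le \langle\nabla f(x_0),x_0-x^*\rangle-\ldots$. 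More usefully, $\psi_0\le \frac{L}{2}\|x_0-x^*\|^2\le\frac{1}{2s}\|x_0-x^*\|^2$. Then
$$\frac{\theta}{1+\theta}\psi_0+R_0\le \frac{\theta+\theta^2}{2s(1+\theta)}\|x_0-x^*\|^2=\frac{\theta}{2s}\|x_0-x^*\|^2,$$
which is exactly the bracket in the statement.

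For the contraction, I will apply Proposition \ref{Prop: F_estimate} with constant $\lambda:=\lambda_{k+1}=\frac{1-\theta}{1+\theta}$. Note $1+\lambda=\frac{2}{1+\theta}$. I will take the combination $(1-\theta)\times$(first inequality) $+\ \theta\times$(second inequality), mirroring the convex proof. This produces $F(x_{k+1})-F^*-(1-\theta)(F(x_k)-F^*)$ on the left. It also produces $\theta(1+\lambda)\eta_{k+1}-\theta\lambda\eta_k=\frac{2\theta}{1+\theta}\eta_{k+1}-\frac{\theta(1-\theta)}{1+\theta}\eta_k$. This dominates $\frac{\theta}{1+\theta}\eta_{k+1}-(1-\theta)\frac{\theta}{1+\theta}\eta_k$ because $\eta_{k+1}\ge0$. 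For the $\psi$ terms, I will use the analogue of \eqref{E: key_inequality} with strong convexity. The inner product $-\frac{\theta}{s}\langle y_{k+2}-y_{k+1},x_{k+1}-x^*\rangle$ is rewritten via $y_{k+2}-y_{k+1}=(x_{k+1}-x_k)-s(\nabla f(x_{k+1})-\nabla f(x_k))$. This yields telescoping differences of $\|x_k-x^*\|^2$ and of $\psi_k$. The $\psi_{k+1}$ coefficient produced should be at least $\frac{\theta}{1+\theta}$, with $\psi_k$ appearing with a coefficient at most $(1-\theta)\frac{\theta}{1+\theta}$. Where it is not, I will use the strong convexity bound $\psi_k\ge\frac{\mu}{2}\|x_k-x^*\|^2$ to trade $\psi$ against $\theta^2\|x_k-x^*\|^2/s$.

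Next I must show $R_{k+1}-(1-\theta)R_k$ is controlled by the leftover terms. These are $-\frac{1-\theta}{2s(1-\mu s)}\|y_{k+2}-y_{k+1}\|^2$, $-\frac{1-\theta}{2s}\|x_{k+1}-x_k\|^2$, the cross terms $\frac{\lambda}{s}\langle y_{k+1}-y_k,\cdot\rangle$, and the distance terms from the key inequality. Since $1-\mu s=(1-\theta)(1+\theta)$, the coefficient of $\|y_{k+2}-y_{k+1}\|^2$ becomes $\frac{1}{2s(1+\theta)}$. This exactly cancels the first part of $R_{k+1}$, which is encouraging. I will expand $R_{k+1}$ and $R_k$ in the variables $a=y_{k+1}-y_k$, $b=x_{k+1}-x_k$, $c=x_k-x^*$ (with $x_{k+1}-x^*=b+c$). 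After cancellation, the residual should be a quadratic form in $(a,b)$, or in $(a,b,c)$, that is nonpositive. I will verify this by Lemma \ref{Lem: quadratic_positive}, checking the discriminant condition $b^2\le4ac$ in terms of $\theta$.

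The main obstacle is this last bookkeeping step. The terms involving $y_{k+2}-y_{k+1}$ in $R_{k+1}$, specifically the cross term $\frac{\theta}{s(1+\theta)}\langle y_{k+2}-y_{k+1},x_{k+1}-x^*\rangle$, must match the contributions from the second inequality of Proposition \ref{Prop: F_estimate}. The remaining quadratic form must have the right sign. If a direct match fails, I will first try to keep part of $\|y_{k+2}-y_{k+1}\|^2$ as an expression in $b$ and $\nabla f$ differences. I will then use the cocoercivity slack $-\frac{s}{2}\|\nabla f(x_{k+1})-\nabla f(x_k)\|^2$, so that the final form reduces to a single perfect square such as $\|\,a-\frac{1+\theta}{1-\theta}\cdot(\ldots)b\|^2$ with nonnegative coefficient.
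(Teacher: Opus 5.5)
Your plan is essentially the paper's proof. It uses the same energy \eqref{E: E_k_SC}, the same combination ($(1-\theta)$ times the first inequality of Proposition \ref{Prop: F_estimate} plus $\theta$ times the second), the plain convex inequality \eqref{E: key_inequality} (no strongly convex analogue is needed), Lemma \ref{Lem: quadratic_positive}, the trade $\psi_k\ge\frac{\theta^2}{2s}\|x_k-x^*\|^2$, and the same estimate of $E_0$.

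The bookkeeping you flag closes once you split $-\frac{\theta}{s}=-\frac{\theta}{s(1+\theta)}-\frac{\theta^2}{s(1+\theta)}$ in front of $\langle y_{k+2}-y_{k+1},x_{k+1}-x^*\rangle$: the first piece cancels the cross term of $R_{k+1}$, and \eqref{E: key_inequality} is applied only to the second. Applying it to the whole term would cancel all of $\theta\psi_{k+1}$, including the $\frac{\theta}{1+\theta}\psi_{k+1}$ you need, and would leave that cross term unmatched. With the split, the residual is $-\frac{\theta}{1+\theta}(\eta_{k+1}+\psi_k)+\frac{\theta^3}{2s(1+\theta)}\|x_k-x^*\|^2-\frac{1-\theta}{2s(1+\theta)}\|a\|^2+\frac{1-\theta}{s(1+\theta)}\langle a,b\rangle-\frac{1-\theta}{2s}\|b\|^2\le 0$, so the cocoercivity fallback is unnecessary.
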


\begin{proof}
Using \eqref{E: E_k_SC}, we have
\begin{equation}\label{E: E_k_diff_SC}
\begin{aligned}
&\quad E_{k+1} - (1-\theta)E_k \\
&= (1-\theta)\left(F(x_{k+1}) - F(x_k)\right) + \theta\left( F(x_{k+1}) - F^* \right)
   + \frac{\theta}{1+\theta}\eta_{k+1}  
   - \frac{\theta(1-\theta)}{1+\theta}\eta_k\\
&\quad + \frac{\theta}{1+\theta}\psi_{k+1} 
  - \frac{\theta(1-\theta)}{1+\theta}\psi_k 
  + \frac{1}{2s(1+\theta)}\| y_{k+2} - y_{k+1} \|^2 - \frac{1-\theta}{2s(1+\theta)}\| y_{k+1} - y_k \|^2 \\
&\quad  + \frac{\theta}{s(1+\theta)} \langle y_{k+2}-y_{k+1}, x_{k+1}-x^* \rangle - \frac{\theta(1-\theta)}{s(1+\theta)}\langle y_{k+1} - y_k, x_k-x^* \rangle \\
&\quad + \frac{\theta^2}{2s(1+\theta)}\| x_{k+1} - x^* \|^2 - \frac{\theta^2(1-\theta)}{2s(1+\theta)}\| x_k - x^* \|^2.
\end{aligned}
\end{equation}
Setting $\lambda_{k+1} = \frac{1-\theta}{1+\theta}$ in Proposition \ref{Prop: F_estimate} and keeping in mind that $\theta^2=\mu s$, we obtain
\begin{equation}\label{E: func_gap_SC}
\begin{aligned}
&\quad (1-\theta)\left(F(x_{k+1}) - F(x_k) \right) + \frac{1}{2s(1+\theta)}\| y_{k+2} - y_{k+1} \|^2 - \frac{1-\theta}{2s(1+\theta)}\| y_{k+1} - y_k \|^2 \\
&\le - \frac{1-\theta}{2s(1+\theta)}\| y_{k+1} - y_k \|^2 - \frac{1-\theta}{2s}\| x_{k+1}-x_k \|^2 
+ \frac{(1-\theta)^2}{s(1+\theta)}\langle y_{k+1}-y_k, x_{k+1}-x_k \rangle, 
\end{aligned}
\end{equation} 
and
\begin{equation}\label{E: func_value_SC}
\begin{aligned}
&\quad \theta\left( F(x_{k+1}) - F^* \right) + \frac{2\theta}{1+\theta}\eta_{k+1} - \frac{\theta(1-\theta)}{1+\theta}\eta_k + \theta\psi_{k+1}\\
&\le -\frac{\theta}{s}\langle y_{k+2}-y_{k+1}, x_{k+1}-x^* \rangle + \frac{\theta(1-\theta)}{s(1+\theta)}\langle y_{k+1}-y_k, x_{k+1}-x^* \rangle \\
&= -\frac{\theta}{s(1+\theta)}\langle y_{k+2}-y_{k+1}, x_{k+1}-x^* \rangle
   + \frac{\theta(1-\theta)}{s(1+\theta)}\langle y_{k+1}-y_k, x_k-x^* \rangle \\
&\quad -\frac{\theta^2}{s(1+\theta)}\langle y_{k+2}-y_{k+1}, x_{k+1}-x^* \rangle
   + \frac{\theta(1-\theta)}{s(1+\theta)}\langle y_{k+1}-y_k, x_{k+1}-x_k \rangle. 
\end{aligned}
\end{equation}
For the second to last term in \eqref{E: func_value_SC}, we substitute \eqref{E: key_inequality}, to obtain
\begin{align*}
&\quad \theta\left( F(x_{k+1}) - F^* \right) + \frac{2\theta}{1+\theta}\eta_{k+1} - \frac{\theta(1-\theta)}{1+\theta}\eta_k 
   + \frac{\theta}{1+\theta}\psi_{k+1}
   - \frac{\theta(1-\theta)}{1+\theta}\psi_k \\
&\quad +\frac{\theta}{s(1+\theta)}\langle y_{k+2}-y_{k+1}, x_{k+1}-x^* \rangle
   - \frac{\theta(1-\theta)}{s(1+\theta)}\langle y_{k+1}-y_k, x_k-x^* \rangle \\
&\quad + \frac{\theta^2}{2s(1+\theta)}\| x_{k+1} - x^* \|^2
   - \frac{\theta^2(1-\theta)}{2s(1+\theta)}\| x_k - x^* \|^2 \\
&\le \frac{\theta(1-\theta)}{s(1+\theta)}\langle y_{k+1}-y_k, x_{k+1}-x_k \rangle
   + \frac{\theta^3}{2s(1+\theta)}\| x_k - x^* \|^2 
   - \frac{\theta}{1+\theta}\psi_k. 
\end{align*}
Using this inequality and \eqref{E: func_gap_SC} in \eqref{E: E_k_diff_SC}, we obtain
\begin{align*}
&\quad E_{k+1}-(1-\theta)E_k \\
&\le - \frac{\theta}{1+\theta}\eta_{k+1}
     + \frac{\theta^3}{2s(1+\theta)}\| x_k - x^* \|^2
     - \frac{\theta}{1+\theta}\psi_k 
     - \frac{1-\theta}{2s(1+\theta)}\| y_{k+1} - y_k \|^2\\
&\quad  
    - \frac{1-\theta}{2s}\| x_{k+1}-x_k \|^2   
    + \frac{1-\theta}{s(1+\theta)}\langle y_{k+1}-y_k, x_{k+1}-x_k \rangle \\
&\le - \frac{\theta}{1+\theta}\eta_{k+1}
     + \frac{\theta^3}{2s(1+\theta)}\| x_k - x^* \|^2
     - \frac{\theta}{1+\theta}\psi_k\\
&\le 0,   
\end{align*}
where the second inequality is due to $\theta\le 1$ and Lemma \ref{Lem: quadratic_positive}, and the last inequality is due to strong convexity of $f$, which implies that $\psi_k \ge \frac{\mu}{2}\| x_k - x^* \|^2 = \frac{\theta^2}{2s}\| x_k - x^* \|^2$. Hence, we obtain $E_{k+1} \le (1-\theta)E_k$ for all $k\ge 0$. Iterating, we obtain
$$ F(x_k) - F^* \le E_k \le (1-\theta)^kE_0. $$ 
With $y_0 = x_0-s\nabla f(x_0)=y_1$ in mind, we have
\begin{align*}
E_0
&= F(x_0) - F^* + \frac{\theta}{1+\theta}\eta_0 + \frac{\theta}{1+\theta}\psi_0 + \frac{\theta^2 }{2s(1+\theta)}\| x_0 - x^* \|^2 \\
&\le F(x_0) - F^* + \frac{\theta}{1+\theta}\eta_0 + \frac{\theta}{2s(1+\theta)}\| x_0 - x^* \|^2 
 + \frac{\theta^2 }{2s(1+\theta)}\| x_0 - x^* \|^2 \\
&= F(x_0) - F^* + \frac{\theta}{1+\theta}\eta_0 + \frac{\theta}{2s}\| x_0 - x^* \|^2,    
\end{align*}
where the inequality is due to $L$-smoothness of $f$ and $s\le \frac{1}{L}$. Hence, we obtain the desired result.
\end{proof}

\begin{remark}
Setting $s=\frac{1}{L}$, the convergence rate $\mathcal{O}\left( (1-\sqrt{\mu/L})^{k} \right)$ matches the one for \eqref{Algo: FISTA-SC}. 
\end{remark}

\section{Conclusions}\label{Sec: conclusions}
We provided a convergence proof for the $\mathcal{O}\left( \frac{1}{k^2} \right)$ rate of the accelerated backward forward algorithm \eqref{Algo: APM} when $f$ is convex, and established the weak convergence of the iterates. When $f$ is strongly convex, we also developed a variant of \eqref{Algo: APM}, for which we proved an accelerated linear convergence rate for the function values.

\begin{acknowledgements}
This work was partially funded by the China Scholarship Council~202208520010, and also benefited from the support of the FMJH Program Gaspard Monge for optimization and operations research and their interactions with data science.
\end{acknowledgements}

\bibliography{myrefs}

\end{document}